\numberwithin{equation}{section}
\newtheorem{theorem}{Theorem}[section]
\newtheorem{proposition}[theorem]{Proposition}
\newtheorem{definition}[theorem]{Definition}
\newcommand{\HK}{\mathsf{H\kern-3pt K}}
\newcommand{\xR}{\mathbb R}
\newcommand{\xdif}{\,{\rm d}}
\newcommand{\xLtwo}{{\rm L}^{2}}
\newcommand{\xCinfty}{{\rm C}^{\infty}} 
\newcommand{\xCzero}{{\rm C}^{0}}
\newcommand{\xCone}{{\rm C}^{1}}
\newcommand{\xC}{\mathbb C}
\title{A note on the differentiability of the Hellinger-Kantorovich distances}
\begin{document}

\author{
Florentine Catharina Flei$\ss$ner 
\thanks{Technische Universit\"at M\"unchen  email:
  \textsf{fleissne@ma.tum.de}.
  } }

\date{}

\maketitle

\begin{abstract}
This paper will deal with differentiability properties of the class of Hellinger-Kantorovich distances $\HK_{\Lambda, \Sigma} \ (\Lambda, \Sigma > 0)$ which was recently introduced on the space $\mathcal{M}(\xR^d)$ of finite nonnegative Radon measures. The $\mathscr{L}^1$-a.e.-differentiability of 
\begin{equation*}
t\mapsto \HK_{\Lambda, \Sigma}(\mu_t, \nu)^2,
\end{equation*}
for $\nu\in\mathcal{M}(\xR^d)$ and absolutely continuous curves $(\mu_t)_t$ in $(\mathcal{M}(\xR^d),\HK_{\Lambda, \Sigma})$, will be examined and the corresponding derivatives will be computed. The characterization of absolutely continuous curves in $(\mathcal{M}(\xR^d), \HK_{\Lambda, \Sigma})$ will be refined. 
\end{abstract}

\section{Introduction}
Recently, a new class of distances on the space $\mathcal{M}(\xR^d)$ of finite nonnegative Radon measures was established by three independent teams \cite{liero2016optimal, liero2018optimal, kondratyev2016new, chizat2018interpolating, chizat2018unbalanced}. We will follow the presentation of these distances by Liero, Mielke and Savar\'e \cite{liero2016optimal, liero2018optimal} who named it \textit{Hellinger-Kantorovich distances}. The class of Hellinger-Kantorovich distances $\HK_{\Lambda, \Sigma} \ (\Lambda, \Sigma > 0)$ is based on the conversion of one measure into another one (possibly having different total mass) by means of transport and creation / annihilation of mass. The parameters $\Lambda$ and $\Sigma$ serve as weightings of the transport part and the mass creation/annihilation part respectively. To be more precise, the square $\HK_{\Lambda, \Sigma}(\mu_1, \mu_2)^2$ of the Hellinger-Kantorovich distance $\HK_{\Lambda, \Sigma}$ between two measures $\mu_1, \mu_2 \in\mathcal{M}(\xR^d)$ on $\xR^d$ corresponds to      
\begin{equation}\label{eq: LET}
\min \Big\{\sum_{i=1}^{2}{\frac{4}{\Sigma}\int_{\xR^d}{(\sigma_i\log \sigma_i - \sigma_i + 1)\xdif \mu_i}} + \int_{\xR^d\times \xR^d}{{\sf c}_{\Lambda, \Sigma}(|x_1-x_2|)\xdif \gamma}: \ \gamma\in\mathcal{M}(\xR^d\times\xR^d), \ \gamma_i\ll\mu_i \Big\},
\end{equation}  
with entropy cost functions  $\frac{4}{\Sigma}(\sigma_i\log \sigma_i - \sigma_i + 1)$,
\begin{equation}
\sigma_i := \frac{\xdif \gamma_i}{\xdif\mu_i} \quad (\gamma_i \text{ i-th marginal of } \gamma), 
\end{equation}
and transportation cost function
\begin{equation} 
{\sf c}_{\Lambda, \Sigma}({\sf d}):= 
\begin{cases}
-\frac{8}{\Sigma}\log(\cos(\sqrt{\Sigma/(4\Lambda)} {\sf d})) &\text{ if } {\sf d} < \pi\sqrt{\Lambda/\Sigma},\\
+\infty &\text{ if } {\sf d} \geq \pi\sqrt{\Lambda/\Sigma}.
\end{cases}
\end{equation} 
There exists an optimal plan $\gamma$ for the Logarithmic Entropy-Transport problem \eqref{eq: LET} (cf. Thm. 3.3 in \cite{liero2018optimal}), and if $\mu_1$ is absolutely continuous with respect to the Lebesgue measure and $\gamma$ is such optimal plan, then there exists a Borel optimal transport mapping $t: \xR^d\to\xR^d$ so that $\gamma$ takes the form
\begin{equation*}
\gamma \ = \ (I\times t)_{\#} \gamma_{1} \ = \ (I\times t)_{\#}\sigma_{1}\mu_1  
\end{equation*}
(cf. Thm. 4.5 in \cite{Gangbo-McCann96} and Thm. 6.6 in \cite{liero2018optimal}). We refer the reader to (\cite{liero2018optimal}, Cor. 7.14, Thms. 7.17 and 7.20) for the proofs that $\HK_{\Lambda, \Sigma}$ defined via the Logarithmic Entropy-Transport problem \eqref{eq: LET} indeed represents a distance on the space of finite nonnegative Radon measures and that $(\mathcal{M}(\xR^d), \HK_{\Lambda, \Sigma})$ is a complete metric space. Furthermore, the Hellinger-Kantorovich distance $\HK_{\Lambda, \Sigma}$ metrizes the weak topology on $\mathcal{M}(\xR^d)$ in duality with continuous and bounded functions (cf. Thm. 7.15 in \cite{liero2018optimal}) and can be interpreted as weighted infimal convolution of the Kantorovich-Wasserstein distance and the Hellinger-Kakutani distance. A representation formula \`a la Benamou-Brenier which can be proved for $\HK_{\Lambda, \Sigma}$ (cf. (\cite{liero2018optimal}, Thm. 8.18; \cite{liero2016optimal}, Thm. 3.6(v))) justifies this interpretation: 
\begin{equation}
\HK_{\Lambda, \Sigma}(\mu_1, \mu_2)^2 = \min\Big\{\int_0^1{\int_{\xR^d}{(\Lambda|v_t|^2 + \Sigma|w_t|^2)\xdif \mu_t}\xdif t}: \ \mu_1 \stackrel{(\mu, v,w)}{\rightsquigarrow}\mu_2 \Big\}
\end{equation}    
where $\mu_1\stackrel{(\mu, v, w)}{\rightsquigarrow}\mu_2$ means that $\mu: [0, 1]\to\mathcal{M}(\xR^d)$ is a continuous curve connecting $\mu(0)=\mu_1$ and $\mu(1)=\mu_2$ and satisfying the continuity equation with reaction 
\begin{equation}\label{eq: continuity equation with reaction}
\partial_t\mu_t = -\Lambda\mathrm{div}(v_t\mu_t) + \Sigma w_t\mu_t,
\end{equation}
governed by Borel functions $v: (0,1)\times\xR^d\to\xR^d$ and $w:(0,1)\times\xR^d\to\mathbb{R}$ with
\begin{equation}\label{eq: v,w}
\int_0^1{\int_{\xR^d}{(\Lambda|v_t|^2 + \Sigma|w_t|^2)\xdif \mu_t}\xdif t} < +\infty, 
\end{equation}
in duality with $\xCinfty$-functions with compact support in $(0.1)\times \xR^d$, i.e.
\begin{equation}\label{eq: continuity equation with reaction in duality}
\int_0^1{\int_{\xR^d}{(\partial_t\psi(t,x) + \Lambda\langle\nabla\psi(t,x), v(t,x)\rangle + \Sigma \psi(t,x)w(t,x))\xdif \mu_t(x)}\xdif t} \ = \ 0 
\end{equation}
for all $\psi\in\xCinfty_c((0,1)\times \xR^d)$. 

The class of such continuous curves $\mu$ satisfying (\eqref{eq: continuity equation with reaction}, \eqref{eq: v,w}) for some Borel vector field $(v,w)$ coincides with the class of \textit{absolutely continuous} curves $(\mu_t)_{t\in[0,1]}$ in $(\mathcal{M}(\xR^d), \HK_{\Lambda, \Sigma})$ \textit{ with square-integrable metric derivatives } 
(cf. Thms. 8.16 and 8.17 in \cite{liero2018optimal}, see Sect. \ref{sec: 2} in this paper). 

In order to deepen our understanding of a distance, it is always worth studying its differentiability along absolutely continuous curves (e.g. see Chap. 8 in \cite{AmbrosioGigliSavare05} for the corresponding analysis of the Kantorovich-Wasserstein distance on the space of Borel probability measures with finite second order moments). The present paper addresses this issue for the class of Hellinger-Kantorovich distances on the space of finite nonnegative Radon measures. Clearly, if $(\mu_t)_{t\in[0,1]}$ is an absolutely continuous curve in $(\mathcal{M}(\xR^d), \HK_{\Lambda, \Sigma})$ and $\nu\in\mathcal{M}(\xR^d)$, then the mapping
\begin{equation}\label{eq: map}
t\mapsto \HK_{\Lambda, \Sigma}(\mu_t, \nu)^2
\end{equation} 
is $\mathscr{L}^1$-a.e. differentiable. A natural question that arises is the one of the concrete form of the corresponding derivatives. We will answer this question for absolutely continuous curves with square-integrable metric derivatives (for which such characterization \eqref{eq: continuity equation with reaction} is available), refine that characterization by providing more information on $(v,w)$ (see Prop. \ref{prop: refinement}) and determine 
\begin{equation}\label{eq: map derivative}
\frac{\xdif}{\xdif t} \HK_{\Lambda, \Sigma}(\mu_t, \nu)^2
\end{equation}
at $\mathscr{L}^1$-a.e. $t\in[0,1]$ (see Thm. \ref{thm: main theorem}). This piece of work can be viewed as continuation of Sect. 2 in the author's paper \cite{fleissnerMMHK} constituting a starting point for the study of differentiability properties of the Hellinger-Kantorovich distances. Therein, we identified elements of the Fr\'echet subdifferential of mappings
\begin{equation*}
t\mapsto -\HK_{\Lambda, \Sigma}((I+tv)_{\#}(1+tR)^2\mu_0, \nu)^2
\end{equation*}
at $t=0$, for $\mu_0, \nu\in\mathcal{M}(\xR^d)$ and bounded Borel functions $v: \xR^d\to\xR^d$ and $R: \xR^d\to\mathbb{R}$. That subdifferential calculus was an essential ingredient for our Minimizing Movement approach to a class of scalar reaction-diffusion equations \cite{fleissnerMMHK} substantiating their gradient-flow-like structure in the space of finite nonnegative Radon measures endowed with the Hellinger-Kantorovich distance $\HK_{\Lambda, \Sigma}$.

The proof in \cite{liero2018optimal} that absolutely continuous curves in $(\mathcal{M}(\mathbb H), \HK_{\Lambda, \Sigma})$ with square-integrable metric derivatives are characterized via (\eqref{eq: continuity equation with reaction}, \eqref{eq: v,w}) was carried out only for $\mathbb{H}=\xR^d$, endowed with usual scalar product $\langle\cdot, \cdot\rangle$ and norm $|\cdot|:=\sqrt{\langle\cdot,\cdot\rangle}$, but according to a comment at the beginning of Sect. 8.5 in \cite{liero2018optimal}, it should be possible to prove such characterization result in a more general setting. We would like to remark that also our computation of the derivatives \eqref{eq: map derivative} may be adapted for general separable Hilbert spaces $\mathbb H$.

Our plan for the paper is to give an equivalent characterization of the Hellinger-Kantorovich distances in Sect. \ref{sec: 1} and to perform the computation of the derivatives \eqref{eq: map derivative} in Sect. \ref{sec: 2}.

\section{Optimal transportation on the cone}\label{sec: 1}

According to (\cite{liero2016optimal}, Sect. 3) and (\cite{liero2018optimal}, Sect. 7), the Logarithmic Entropy-Transport problem \eqref{eq: LET} translates into a problem of optimal transportation on the geometric cone $\mathfrak{C}$ on $\xR^d$, see \eqref{eq: cone 1}, \eqref{eq: cone} below. The fact that all the information on transport of mass and creation / annihilation of mass according to \eqref{eq: LET} lies in a pure transportation problem has proved extremely useful for the analysis of $\HK_{\Lambda, \Sigma}$ in \cite{liero2018optimal} and for our subdifferential calculus in \cite{fleissnerMMHK}. 

The geometric cone is defined as the quotient space 
\begin{equation}
\mathfrak{C}:= \xR^d\times [0, +\infty)/\sim
\end{equation}
with 
\begin{equation}
(x_1, r_1) \sim (x_2, r_2) \quad \Leftrightarrow \quad r_1 = r_2 = 0 \text{ or } r_1=r_2, \ x_1 = x_2
\end{equation}
and is endowed with a class of distances ${\sf d}_{\mathfrak{C}, \Lambda, \Sigma} \ (\Lambda, \Sigma > 0)$. The vertex $\mathfrak{o}$ (for $r=0$) and $[x,r]$ (for $x\in \xR^d$ and $r>0$) denote the corresponding equivalence classes 
 and  
\begin{equation}
{\sf d}_{\mathfrak{C}, \Lambda, \Sigma}([x_1,r_1],[x_2, r_2])^2:= 
\frac{4}{\Sigma}\Big(r_1^2 + r_2^2 - 2 r_1 r_2 \cos\Big(\Big(\sqrt{\Sigma/4\Lambda}\ |x_1 - x_2|\Big)\wedge\pi\Big)
\end{equation} 
(where $\mathfrak{o}$ is identified with $[\bar{x}, 0]$ for some $\bar{x}\in \xR^d$). The distance ${\sf d}_{\mathfrak{C}, \Lambda, \Sigma}$ gives rise to an optimal transport problem on the cone and therewith to an extended quadratic Kantorovich-Wasserstein distance $\mathcal{W}_{\mathfrak{C}, \Lambda, \Sigma}$ on the space $\mathcal{M}_2(\mathfrak{C})$ of finite nonnegative Radon measures on $\mathfrak{C}$ with finite second order moments, i.e. $\int_{\mathfrak{C}}{{\sf d}_{\mathfrak{C}, \Lambda, \Sigma}([x,r],\mathfrak{o})^2\xdif \alpha([x,r])} < +\infty$. The extended Kantorovich-Wasserstein distance $\mathcal{W}_{\mathfrak{C}, \Lambda, \Sigma}(\alpha_1, \alpha_2)$ between two measures $\alpha_1, \alpha_2\in\mathcal{M}_2(\mathfrak{C})$ is equal to $+\infty$ if $\alpha_1(\mathfrak{C})\neq\alpha_2(\mathfrak{C})$ and is given by
\begin{equation}\label{eq: Wasserstein on the cone}
\mathcal{W}_{\mathfrak{C}, \Lambda, \Sigma}(\alpha_1, \alpha_2)^2 := 
\min\Big\{\int_{\mathfrak{C}\times\mathfrak{C}}{{\sf d}_{\mathfrak{C}, \Lambda, \Sigma}([x_1,r_1],[x_2,r_2])^2\xdif \beta} \ | \ \beta\in M(\alpha_1, \alpha_2)\Big\}
\end{equation}
if $\alpha_1(\mathfrak{C}) = \alpha_2(\mathfrak{C})$, 
with $M(\alpha_1, \alpha_2)$ being the set of finite nonnegative Radon measures on $\mathfrak{C}\times \mathfrak{C}$ whose first and second marginals coincide with $\alpha_1$ and $\alpha_2$. 
Every measure $\alpha\in\mathcal{M}_2(\mathfrak{C})$ on the cone is assigned a measure $\mathfrak{h}\alpha\in\mathcal{M}(\xR^d)$ on $\xR^d$, 
\begin{equation}\label{eq: homogenous marginal}
\mathfrak{h}\alpha := {\sf x}_{\#}({\sf r}^2\alpha), 
\end{equation}
with 
$({\sf x}, {\sf r}): \mathfrak{C}\to \xR^d\times [0, +\infty)$ defined as 
\begin{equation}\label{eq: homogenous marginal 2}
({\sf x}, {\sf r})([x, r]) := (x,r) \text{ for } [x,r]\in\mathfrak{C}, \ r > 0, \ ({\sf x}, {\sf r})(\mathfrak{o}):=(\bar{x},0), 
\end{equation}
which means $\int_{\xR^d}{\phi(x)\xdif (\mathfrak{h}\alpha)} = \int_{\mathfrak{C}}{{\sf r}^2\phi({\sf x})\xdif \alpha}$ for all continuous and bounded functions $\phi: \xR^d\to\mathbb{R}$ (short $\phi\in\xCzero_b(\xR^d)$).
Please note that the mapping $\mathfrak{h}: \mathcal{M}_2(\mathfrak{C}) \to \mathcal{M}(\xR^d)$ is not injective. 

Now, an equivalent characterization of the Hellinger-Kantorovich distance $\HK_{\Lambda, \Sigma}$ is given by the transportation problems
\begin{eqnarray}
\HK_{\Lambda, \Sigma}(\mu_1, \mu_2)^2 
\ = \ \min\Big\{\mathcal W_{\mathfrak{C}, \Lambda, \Sigma}(\alpha_1, \alpha_2)^2 \ \Big| \ \alpha_i\in\mathcal{M}_2(\mathfrak{C}), \  \mathfrak{h}\alpha_i = \mu_i \Big\} \label{eq: cone 1} \\
\ = \ \min\Big\{\mathcal W_{\mathfrak{C}, \Lambda, \Sigma}(\alpha_1, \alpha_2)^2 + \frac{4}{\Sigma}\sum_{i=1}^{2}{(\mu_i - \mathfrak{h}\alpha_i)(\xR^d)}  \Big| \  \alpha_i\in\mathcal{M}_2(\mathfrak{C}), \ \mathfrak{h}\alpha_i \leq \mu_i \Big\}, \label{eq: cone}
\end{eqnarray}
cf. Probl. 7.4, Thm. 7.6, Lem. 7.9, Thm. 7.20 in \cite{liero2018optimal}. Every solution $\gamma\in\mathcal{M}(\xR^d\times \xR^d)$ to the Logarithmic Entropy-Transport problem \eqref{eq: LET} induces a solution $\beta\in\mathcal{M}(\mathfrak{C}\times\mathfrak{C})$ to (\eqref{eq: cone}, \eqref{eq: Wasserstein on the cone}): if $\gamma$ is an optimal plan for \eqref{eq: LET} with Lebesgue decompositions \begin{footnote}{according to Lem. 2.3 in \cite{liero2018optimal}, there exist Borel functions $\rho_i: \xR^d\to [0, +\infty)$ and nonnegative finite Radon measures $\mu_i^\bot\in\mathcal{M}(\xR^d), \ \mu_i^\bot \bot \gamma_i,$ so that \eqref{eq: Lebesgue decom} holds good} 
\end{footnote}
\begin{equation}\label{eq: Lebesgue decom} 
\mu_i = \rho_i\gamma_i + \mu_i^\bot,
\end{equation} 
then \begin{equation}\label{eq: induced solution on the cone}
\beta := ([x_1, \sqrt{\rho_1(x_1)}], [x_2, \sqrt{\rho_2(x_2)}])_{\#}\gamma \ \in \mathcal{M}(\mathfrak{C}\times\mathfrak{C})
\end{equation}
is an optimal plan for the transport problem (\eqref{eq: cone}, \eqref{eq: Wasserstein on the cone}) (cf. (\cite{liero2018optimal}, Thm. 7.20(iii))). Furthermore, if $\beta\in\mathcal{M}(\mathfrak{C}\times\mathfrak{C})$ is a solution to (\eqref{eq: cone}, \eqref{eq: Wasserstein on the cone}) or a solution to (\eqref{eq: cone 1}, \eqref{eq: Wasserstein on the cone}) (which exists by (\cite{liero2018optimal}, Thm. 7.6)), then 
\begin{equation}\label{eq: transport}
\beta\Big(\Big\{([x_1, r_1], [x_2,r_2])\in\mathfrak{C}\times\mathfrak{C}: \ r_1, r_2 > 0, \ |x_1 - x_2| > \pi\sqrt{\Lambda/\Sigma}\Big\}\Big)= 0,
\end{equation}  
(cf. (\cite{liero2018optimal}, Lem. 7.19)). 

Finally, we show how to construct geodesics in $(\mathfrak{C}, {\sf d}_{\mathfrak{C}, \Lambda, \Sigma})$ (cf. Sect. 8.1 in \cite{liero2018optimal}) as they will play an important role in our analysis of \eqref{eq: map derivative}. We suppose that $|x_1 - x_2| \leq \pi\sqrt{\Lambda/\Sigma}, \ r_1, r_2 > 0,$ and search for functions $\mathcal{R}: [0, 1]\to [0, +\infty)$ and $\theta: [0, 1]\to [0, 1]$ so that the curve $\eta: [0,1]\to\mathfrak{C}$ defined as $\eta(s):=[x_1 + \theta(s)(x_2-x_1), \mathcal{R}(s)]$ is a (constant speed) geodesic connecting $[x_1, r_1]$ and $[x_2, r_2]$, which means ${\sf d}_{\mathfrak{C}, \Lambda, \Sigma}(\eta(s), \eta(t)) = |s-t| {\sf d}_{\mathfrak{C}, \Lambda, \Sigma}([x_1,r_1], [x_2,r_2])$ for all $s, t\in[0,1]$. If $x_1 = x_2$, we set $\theta\equiv 0$.  We note that 
\begin{equation}\label{eq: z1}
{\sf d}_{\mathfrak{C}, \Lambda, \Sigma}(\eta(s), \eta(t))^2 \ = \ 
|z(s) - z(t)|^2_{\xC},
\end{equation}
where $z: [0, 1]\to\xC$ is the curve in the complex plane $\xC$ defined as 
\begin{equation}\label{eq: z}
z(s):=\frac{2}{\sqrt{\Sigma}}\mathcal{R}(s)\exp\Big(i\theta(s)\sqrt{\Sigma/4\Lambda} \ |x_1 - x_2|\Big),
\end{equation}
and $|\cdot|_\xC$ denotes the absolute value for complex numbers. Thus, if $z$ is a geodesic in the complex plane between $z_1:=\frac{2}{\sqrt{\Sigma}} r_1$ and $z_2:=\frac{2}{\sqrt{\Sigma}}r_2\exp\Big(i\sqrt{\Sigma/4\Lambda} \ |x_1 - x_2|\Big)$, i.e. 
\begin{equation}
z(s) = z_1 + s(z_2-z_1) \quad \text{ for all } s\in[0,1],
\end{equation}
then $\eta$ is a geodesic in $(\mathfrak{C}, {\sf d}_{\mathfrak{C}, \Lambda, \Sigma})$ between $[x_1, r_1]$ and $[x_2, r_2]$. This condition yields an appropriate choice for $\mathcal{R}: [0, 1] \to [0, +\infty)$ and $\theta: [0,1]\to[0,1]$, and it is not difficult to see that they are both smooth functions, their first derivatives satisfy
\begin{equation}\label{eq: first derivatives R and theta}
\frac{4}{\Sigma}(\mathcal{R}'(s))^2 + \frac{1}{\Lambda}\mathcal{R}(s)^2(\theta'(s))^2|x_1 - x_2|^2 \ = \ {\sf d}_{\mathfrak{C}, \Lambda, \Sigma}([x_1, r_1], [x_2, r_2])^2 \quad \text{ for all } s\in(0,1),
\end{equation} 
and they are right differentiable at $s=0$. 
We obtain a geodesic from $[x_1, r_1]$ to the vertex $\mathfrak{o}$ by setting $\theta\equiv 0$ and $\mathcal{R}(s):=(1-s)r_1$ and identifying $\mathfrak{o}$ with $[x_1, 0]$. Also in this case, \eqref{eq: first derivatives R and theta} holds good. 

\section{Differentiability results}\label{sec: 2}

We fix $\Lambda, \Sigma > 0$ and examine the behaviour of the Hellinger-Kantorovich distance $\HK_{\Lambda, \Sigma}$ along absolutely continuous curves. 

Let $(\mu_t)_{t\in[0,1]}$ be an absolutely continuous curve in $(\mathcal{M}(\xR^d), \HK_{\Lambda, \Sigma})$ with square-integrable metric derivative, i.e. the limit
\begin{equation}\label{eq: metric derivative}
|\mu_t'| := \mathop{\lim}_{h\to 0} \frac{\HK_{\Lambda, \Sigma}(\mu_{t+h}, \mu_t)}{|h|}
\end{equation}
exists for $\mathscr{L}^1$-a.e. $t\in (0,1)$, the function $t \mapsto |\mu_t'|$ which is called \textit{metric derivative} of $(\mu_t)_t$ belongs to $\xLtwo((0,1))$ and  
\begin{equation}\label{eq: absolutely continuous}
\HK_{\Lambda, \Sigma}(\mu_s,\mu_t) \leq \int^{t}_{s}{|\mu_r'| \xdif r} \quad \quad \text{ for all } 0\leq s \leq t \leq 1 
\end{equation}
(cf. Def. 1.1.1 and Thm. 1.1.2 in \cite{AmbrosioGigliSavare05}). According to Thms. 8.16 and 8.17 in \cite{liero2018optimal}, there exists a Borel vector field $(v,w): (0,1)\times \xR^d \to \xR^d\times\mathbb{R}$ so that the continuity equation with reaction
\begin{equation}\label{eq: cewr}
\partial_t\mu_t = -\Lambda\mathrm{div}(v_t\mu_t) + \Sigma w_t\mu_t
\end{equation}
($v_t:= v(t, \cdot), \ w_t:= w(t, \cdot)$) holds good, in duality with $\xCinfty$-functions with compact support in $(0,1)\times \xR^d$ (see \eqref{eq: continuity equation with reaction in duality}), and 
\begin{equation}\label{eq: vw metric derivative} 
\int_{\xR^d}{(\Lambda|v_t|^2 + \Sigma|w_t|^2)\xdif \mu_t} \ = \ |\mu_t'|^2 \quad\text{ for } \mathscr{L}^1\text{-a.e. } t\in(0,1). 
\end{equation}
For every $t\in(0,1)$ and $h\in(-t, 1-t)$, there exists a plan $\beta_{t, t+h}\in\mathcal{M}(\mathfrak{C}\times\mathfrak{C})$ which is optimal in the definition of $\HK_{\Lambda, \Sigma}(\mu_t, \mu_{t+h})^2$ according to (\eqref{eq: cone 1}, \eqref{eq: Wasserstein on the cone}) and whose first marginal $\pi^1_{\#}\beta_{t, t+h}$ satisfies
\begin{equation}\label{eq: beta}
\int_{\mathfrak{C}}{\phi([x,r])\xdif (\pi^1_{\#}\beta_{t, t+h})} = \int_{\xR^d}{\phi([x, 1])\xdif \mu_t} + h^2\phi(\mathfrak{o})
\end{equation} 
for all $\phi\in\xCzero_b(\mathfrak{C})$ (cf. Thm. 7.6 and Lem. 7.10 in \cite{liero2018optimal}).

We fix $\nu\in\mathcal{M}(\xR^d)$. It follows from \eqref{eq: absolutely continuous} that
\begin{equation}\label{eq: mapping haupt}
t\mapsto \HK_{\Lambda, \Sigma}(\mu_t, \nu) 
\end{equation}
is an absolutely continuous mapping from $[0,1]$ to $[0, +\infty)$ and thus $\mathscr{L}^1$-a.e. differentiable. 

The plan of this section is as follows. First, Prop. \ref{prop: refinement} will identify $(v_t, w_t)$ as belonging to a particular class of functions. Second, the push-forwards of $\beta_{t, t+h}$ through mappings 
\begin{equation}\label{eq: mapping for push-forward}
(y_1,y_2) \mapsto \Big(({\sf x}(y_1),{\sf r}(y_1)), \Big(\frac{1}{h\Lambda}\mathcal{R}_{y_1,y_2}(s)\theta_{y_1,y_2}'(s)({\sf x}(y_2)-{\sf x}(y_1)), \frac{2}{h\Sigma}\mathcal{R}_{y_1,y_2}'(s)\Big)\Big)
\end{equation} 
from $\Big(\mathfrak{C}\times\mathfrak{C}\Big)\setminus\Big\{([x_1, r_1], [x_2,r_2])\in\mathfrak{C}\times\mathfrak{C}: \ r_1, r_2 > 0, \ |x_1 - x_2| > \pi\sqrt{\Lambda/\Sigma}\Big\}$ to $(\xR^d\times\xR)\times(\xR^d\times\xR)$ will be considered, for $s\in(0,1)$, with $y_i:=[x_i,r_i]$, ${\sf x}$, ${\sf r}$ as in \eqref{eq: homogenous marginal 2}, and 
\begin{equation*}
[0,1]\ni s\mapsto (\theta_{[x_1,r_1], [x_2,r_2]}(s), \mathcal{R}_{[x_1,r_1],[x_2,r_2]}(s)) \in [0,1]\times [0,+\infty)
\end{equation*}
being constructed according to Sect. \ref{sec: 1} (cf. \eqref{eq: z1}-\eqref{eq: first derivatives R and theta}) so that 
\begin{equation}\label{eq: geodesics for push-forward}
s\mapsto [x_1 + \theta_{[x_1,r_1],[x_2,r_2]}(s)(x_2-x_1), \mathcal{R}_{[x_1,r_1],[x_2,r_2]}(s)] \text{ is a geodesic from } [x_1,r_1] \text{ to } [x_2,r_2]. 
\end{equation}
Please recall \eqref{eq: transport} in this context and note that, by \eqref{eq: first derivatives R and theta}, the mappings \eqref{eq: mapping for push-forward} are Borel measurable.
Their second components 
may be interpreted as blow-ups of tangent vectors to geodesics in $(\mathfrak{C}, {\sf d }_{\mathfrak{C}, \Lambda, \Sigma})$ and Prop. \ref{prop: push-forward} will provide information on the limits of the corresponding push-forwards of $\beta_{t,t+h}$ as $h\to0$, linking them to $(v_t,w_t)$. That result will be helpful in studying the $\mathscr{L}^1$-a.e.-differentiability of the mapping \eqref{eq: mapping haupt} and finally, in Thm. \ref{thm: main theorem}, we will determine the derivatives by computing
\begin{equation}\label{eq: derivatives haupt}
\frac{\xdif}{\xdif t} \HK_{\Lambda, \Sigma}(\mu_t, \nu)^2
\end{equation}  
at $\mathscr{L}^1$-a.e. $t\in(0,1)$.  
\\ 

The above notation holds good throughout this section. 
\begin{proposition}\label{prop: refinement}
For $\mathscr{L}^1$-a.e. $t\in(0,1)$, the Borel function $(v_t, w_t)$ belongs to the closure in $\xLtwo(\mu_t, \xR^d\times\mathbb{R})$ of the subspace $\{(\nabla\zeta, \zeta): \ \zeta\in\xCinfty_c(\xR^d)\}$. 
\end{proposition}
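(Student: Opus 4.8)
\noindent\emph{Proof proposal.} The statement is exactly that the optimal vector field $(v_t,w_t)$ attached to an absolutely continuous curve lies in the ``tangent space'' of $(\mathcal{M}(\xR^d),\HK_{\Lambda,\Sigma})$ at $\mu_t$, in analogy with the Kantorovich--Wasserstein fact (Ch.~8 of \cite{AmbrosioGigliSavare05}) that the velocity of an absolutely continuous curve lies in $\overline{\{\nabla\zeta\}}^{\xLtwo(\mu_t)}$. I would prove it by the classical optimality/orthogonal--projection argument: project $(v_t,w_t)$ onto the candidate tangent space, show the projection still solves the continuity equation with reaction, and use minimality of the metric derivative among such solutions to conclude that the projection changes nothing.

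To set this up, for every $t\in(0,1)$ let
\[
V_t:=\overline{\{(\nabla\zeta,\zeta):\ \zeta\in\xCinfty_c(\xR^d)\}}^{\,\xLtwo(\mu_t,\xR^d\times\xR)},
\]
a closed subspace, and endow $\xLtwo(\mu_t,\xR^d\times\xR)$ with the inner product $\langle(v,w),(\tilde v,\tilde w)\rangle_{\Lambda,\Sigma}:=\int_{\xR^d}(\Lambda\langle v,\tilde v\rangle+\Sigma w\tilde w)\xdif\mu_t$, whose norm is equivalent to the standard one since $\Lambda,\Sigma>0$, so that $V_t$ is the same closed subspace either way. Let $(\hat v_t,\hat w_t)$ be the $\langle\cdot,\cdot\rangle_{\Lambda,\Sigma}$--orthogonal projection of $(v_t,w_t)$ onto $V_t$; by construction,
\[
\int_{\xR^d}\bigl(\Lambda\langle\nabla\zeta,\,v_t-\hat v_t\rangle+\Sigma\,\zeta\,(w_t-\hat w_t)\bigr)\xdif\mu_t=0\qquad\text{for all }\zeta\in\xCinfty_c(\xR^d).
\]
The weighting by $\Lambda$ and $\Sigma$ here is chosen precisely so that this relation matches the pairing in \eqref{eq: continuity equation with reaction in duality}: since for each fixed $t$ the pair $(\nabla\psi(t,\cdot),\psi(t,\cdot))$ with $\psi\in\xCinfty_c((0,1)\times\xR^d)$ belongs to $V_t$, the $t$--integrand in \eqref{eq: continuity equation with reaction in duality} is left unchanged when $(v,w)$ is replaced by $(\hat v,\hat w)$; integrating in $t$ shows that $(\hat v,\hat w)$ again solves the continuity equation with reaction \eqref{eq: cewr} in the duality sense \eqref{eq: continuity equation with reaction in duality}.

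Now I would close the argument. The Pythagorean identity for the projection gives $\int_{\xR^d}(\Lambda|\hat v_t|^2+\Sigma|\hat w_t|^2)\xdif\mu_t\le\int_{\xR^d}(\Lambda|v_t|^2+\Sigma|w_t|^2)\xdif\mu_t=|\mu_t'|^2$ for $\mathscr{L}^1$--a.e. $t$ (using \eqref{eq: vw metric derivative}), so $(\hat v,\hat w)$ has finite action. By the lower--bound half of the characterisation of absolutely continuous curves through the continuity equation with reaction (Thms.~8.16 and~8.17 in \cite{liero2018optimal}), any Borel vector field solving \eqref{eq: cewr} with finite action satisfies $|\mu_t'|^2\le\int_{\xR^d}(\Lambda|\hat v_t|^2+\Sigma|\hat w_t|^2)\xdif\mu_t$ for $\mathscr{L}^1$--a.e. $t$. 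Combining the two inequalities forces $\int_{\xR^d}(\Lambda|\hat v_t|^2+\Sigma|\hat w_t|^2)\xdif\mu_t=\int_{\xR^d}(\Lambda|v_t|^2+\Sigma|w_t|^2)\xdif\mu_t$ at $\mathscr{L}^1$--a.e. $t$; since $(\hat v_t,\hat w_t)$ is the $\langle\cdot,\cdot\rangle_{\Lambda,\Sigma}$--orthogonal projection of $(v_t,w_t)$, equality of norms means $(v_t,w_t)-(\hat v_t,\hat w_t)=0$ in $\xLtwo(\mu_t,\xR^d\times\xR)$, i.e. $(v_t,w_t)\in V_t$ at $\mathscr{L}^1$--a.e. $t$, which is the claim.

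The step I expect to be the main obstacle is making the projection $t\mapsto(\hat v_t,\hat w_t)$ a genuine Borel vector field, since the underlying Hilbert space $\xLtwo(\mu_t,\xR^d\times\xR)$ varies with $t$. I would handle this by a finite--dimensional truncation: fix a countable family $\{\zeta_n\}_{n}\subset\xCinfty_c(\xR^d)$ whose span is dense in $\xCinfty_c(\xR^d)$ (say for the $\xCone$--norm on each ball), so that $\{(\nabla\zeta_n,\zeta_n)\}_n$ spans a dense subspace of $V_t$ for every $t$. Because $t\mapsto\mu_t$ is weakly continuous and $(v,w)$ is Borel, the Gram entries $\int(\Lambda\langle\nabla\zeta_m,\nabla\zeta_n\rangle+\Sigma\zeta_m\zeta_n)\xdif\mu_t$ and the data $\int(\Lambda\langle\nabla\zeta_n,v_t\rangle+\Sigma\zeta_n w_t)\xdif\mu_t$ are Borel in $t$; applying the Moore--Penrose inverse (a Borel function of the matrix) to account for possible degeneracies yields Borel coefficients, hence Borel vector fields $(\hat v^{(N)}_t,\hat w^{(N)}_t)$, the projections onto $\mathrm{span}\{(\nabla\zeta_n,\zeta_n):n\le N\}$. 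These converge in $\xLtwo(\mu_t,\xR^d\times\xR)$ to $(\hat v_t,\hat w_t)$ for every $t$, and extracting $\mu_t$--a.e. pointwise limits along a subsequence (Egorov, over an exhaustion of $(0,1)$) produces a Borel representative of $(\hat v_t,\hat w_t)$. This is routine in the spirit of Ch.~8 of \cite{AmbrosioGigliSavare05}, and the remaining steps are elementary Hilbert--space manipulations together with the already--cited characterisation.
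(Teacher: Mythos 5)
Your proof is correct, and it takes a genuinely different (and shorter) route than the paper's. The paper follows AGS Thm.~8.3.1: it first derives the a~priori bound \eqref{eq: L} on $\limsup_{h\to0}|h|^{-1}|\int\phi\,\xdif\mu_{t+h}-\int\phi\,\xdif\mu_t|$ from the cone--geodesic estimates \eqref{eq: 0}--\eqref{eq: 2}, uses it together with Hahn--Banach to define a bounded linear functional $L$ on the closure $\mathcal{V}$ of $\{(\nabla\zeta,\zeta):\zeta\in\xCinfty_c((0,1)\times\xR^d)\}$ in $\xLtwo(\mu,\xR^d\times\xR)$, solves the minimum problem \eqref{eq: min} to manufacture a tangent field $(\tilde v,\tilde w)\in\mathcal{V}$ satisfying \eqref{eq: cewr} and \eqref{eq: tilde vw metric derivative}, and only then runs the squeeze against Thm.~8.17 of Liero--Mielke--Savar\'e plus strict convexity to identify $(\tilde v,\tilde w)$ with $(v,w)$. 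You instead project the already-given $(v_t,w_t)$ fibrewise onto $V_t$; the ``$\leq$'' half of the squeeze then comes for free from Pythagoras and \eqref{eq: vw metric derivative}, so the analytic estimate \eqref{eq: L} is not needed at all for the Proposition, while the ``$\geq$'' half is the same appeal to Thm.~8.17. What you pay for this economy is the measurable-selection issue for $t\mapsto(\hat v_t,\hat w_t)$, which the paper sidesteps because its minimizer lives in $\mathcal{V}\subset\xLtwo(\mu,\xR^d\times\xR)$ from the outset and is therefore automatically a Borel field whose fibres lie in $V_t$ for a.e.\ $t$; your Gram-matrix/Moore--Penrose truncation does handle it, though as written it is only sketched --- the cleanest closing step is to observe that the truncated projections are dominated by $\|(v_t,w_t)\|_{\xLtwo(\mu_t,\xR^d\times\xR)}\in\xLtwo((0,1))$, hence converge in $\xLtwo(\mu,\xR^d\times\xR)$, and a $\mu$-a.e.\ convergent subsequence furnishes the Borel representative. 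One practical remark: the estimates \eqref{eq: 0}--\eqref{eq: 2} and \eqref{eq: L} that your argument renders unnecessary here are reused in the proof of Prop.~\ref{prop: push-forward}, so within the architecture of the whole paper the author's detour is not wasted effort.
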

Here $(\xLtwo(\mu_t, \xR^d\times\xR), ||\cdot||_{\xLtwo(\mu_t, \xR^d\times\xR)})$ denotes the normed space of all $\mu_t$-measurable functions $(\bar{v}, \bar{w})$ from $\xR^d$ to $\xR^d\times\xR$ satisfying  
\begin{equation}
||(\bar{v},\bar{w})||_{\xLtwo(\mu_t, \xR^d\times\xR)} := \Big(\int_{\xR^d}{(\Lambda|\bar{v}|^2 + \Sigma|\bar{w}|^2)\xdif \mu_t}\Big)^{1/2} < +\infty. 
\end{equation}
\begin{proof}
We construct a Borel vector field $(\tilde{v}, \tilde{w}): (0,1)\times\xR^d\to\xR^d\times\xR$ satisfying \eqref{eq: cewr} so that, for $\mathscr{L}^1$-a.e. $t\in(0,1)$, the function $(\tilde{v}_t,\tilde{w}_t)$ belongs to the closure in $\xLtwo(\mu_t, \xR^d\times\mathbb{R})$ of the subspace $\{(\nabla\zeta, \zeta): \ \zeta\in\xCinfty_c(\xR^d)\}$  and 
\begin{equation}\label{eq: tilde vw metric derivative}
||(\tilde{v}_t,\tilde{w}_t)||_{\xLtwo(\mu_t, \xR^d\times\xR)}^2 = \int_{\xR^d}{(\Lambda|\tilde{v}_t|^2 + \Sigma|\tilde{w}_t|^2)\xdif \mu_t} \ \leq \ |\mu_t'|^2. 
\end{equation} 
We begin the proof with some estimations. 
Let $\phi\in\xCinfty_c(\xR^d)$. It follows from the construction of $\mathcal{R}_{[x_1,r_1],[x_2,r_2]}$ and $\theta_{[x_1,r_1],[x_2,r_2]}$ according to \eqref{eq: z1}-\eqref{eq: first derivatives R and theta} that 
\begin{eqnarray*}
\frac{2}{\Sigma}\frac{\xdif}{\xdif^2 s}\mathcal{R}_{[x_1,r_1],[x_2,r_2]}(s)^2 & = & {\sf d}_{\mathfrak{C}, \Lambda, \Sigma}([x_1,r_1],[x_2,r_2])^2, \\
\Big|\theta_{[x_1,r_1],[x_2,r_2]}''(s)\mathcal{R}_{[x_1,r_1],[x_2,r_2]}(s)^2(x_2-x_1)\Big| &\leq& C_{\Sigma, \Lambda} {\sf d}_{\mathfrak{C},\Lambda, \Sigma}([x_1,r_1],[x_2,r_2])^2, \\
\Big|2\theta_{[x_1,r_1],[x_2,r_2]}'(s)\mathcal{R}_{[x_1,r_1],[x_2,r_2]}(s)\mathcal{R}'_{[x_1,r_1],[x_2,r_2]}(s)(x_2-x_1)\Big| &\leq& C_{\Sigma, \Lambda}{\sf d}_{\mathfrak{C},\Lambda, \Sigma}([x_1,r_1],[x_2,r_2])^2, \\ 
\Big|\frac{\xdif}{\xdif^2 s}\Big[\phi(x_1+\theta_{[x_1,r_1],[x_2,r_2]}(s)(x_2-x_1))\mathcal{R}_{[x_1,r_1],[x_2,r_2]}(s)^2\Big]\Big| &\leq& C_\phi C_{\Sigma,\Lambda}{\sf d}_{\mathfrak{C}, \Lambda, \Sigma}([x_1,r_1],[x_2,r_2])^2,
\end{eqnarray*}
for $s\in(0,1)$, with $C_\phi > 0$ only depending on $\phi$ and $C_{\Sigma, \Lambda}:=2\Sigma + 4\Lambda$; we refer the reader to the proof of Prop. 2.5 in \cite{fleissnerMMHK} for details. With \eqref{eq: first derivatives R and theta} and these estimations on hand, it is straightforward to prove that there exists a constant $C_{\phi,\Lambda, \Sigma} > 0$ only depending on $\phi, \ \Lambda$ and $\Sigma$ so that
\begin{equation}\label{eq: 0}
|\varphi'_{y_1,y_2}(\bar{s}) - \varphi'_{y_1,y_2}(s)| \leq C_{\phi,\Lambda, \Sigma} \ {\sf d}_{\mathfrak{C},\Lambda,\Sigma}(y_1,y_2)^2, 
\end{equation}
\begin{equation}\label{eq: 1}
\Big|\varphi'_{y_1,y_2}(s) - \langle\nabla\phi(x_1), \theta'_{y_1, y_2}(s)(x_2-x_2)\rangle\mathcal{R}_{y_1,y_2}(s)^2 + 2\phi(x_1)\mathcal{R}'_{y_1,y_2}(s)\mathcal{R}_{y_1,y_2}(s)\Big| \leq C_{\phi,\Lambda, \Sigma} \ {\sf d}_{\mathfrak{C},\Lambda,\Sigma}(y_1,y_2)^2
\end{equation}
and 
\begin{equation}\label{eq: 2}
\Big|\Big(\langle\nabla\phi(x_1), \theta'_{y_1, y_2}(s)(x_2-x_2)\rangle\mathcal{R}_{y_1,y_2}(s) + 2\phi(x_1)\mathcal{R}'_{y_1,y_2}(s)\Big)\Big(\mathcal{R}_{y_1,y_2}(s)-r_1\Big)\Big| \leq C_{\phi,\Lambda, \Sigma} \ {\sf d}_{\mathfrak{C},\Lambda,\Sigma}(y_1,y_2)^2
\end{equation}
for all $s, \bar{s} \in(0,1)$, with $y_i:=[x_i,r_i], \ \varphi_{y_1,y_2}(s):=\phi(x_1+\theta_{[x_1,r_1],[x_2,r_2]}(s)(x_2-x_1))\mathcal{R}_{[x_1,r_1],[x_2,r_2]}(s)^2$. 

Now, let $t\in(0,1)$ so that the limit \eqref{eq: metric derivative} exists and $\mathfrak{C}_\mathfrak{o} := \mathfrak{C}\setminus\{\mathfrak{o}\}$. By applying \eqref{eq: transport}, \eqref{eq: 1}, \eqref{eq: 2}, \eqref{eq: beta}, H\"older's inequality and \eqref{eq: first derivatives R and theta}, we obtain
\begin{align*}
&\Big|\int_{\xR^d}{\phi\xdif \mu_{t+h}} - \int_{\xR^d}{\phi\xdif\mu_t}\Big| = \Big|\int_{\mathfrak{C}\times\mathfrak{C}}{(\phi(x_2)r_2^2 - \phi(x_1)r_1^2)\xdif\beta_{t,t+h}}\Big|  \leq \int_{\mathfrak{C}\times\mathfrak{C}}{\int_0^1{|\varphi'_{y_1,y_2}(s)| 
\xdif s}\xdif \beta_{t,t+h}} \leq \\
& \int_{\mathfrak{C}_{\mathfrak{o}}\times\mathfrak{C}}{\int_0^1{\Big|\langle\nabla\phi(x_1),\theta'_{[x_1,r_1],[x_2,r_2]}(s)(x_2-x_1)\rangle \mathcal{R}_{[x_1,r_1],[x_2,r_2]}(s)  + 2\phi(x_1)\mathcal{R}'_{[x_1,r_1],[x_2,r_2]}(s)\Big|\xdif s}\xdif\beta_{t,t+h}}  \\
&\quad \quad+ 2C_{\phi,\Lambda, \Sigma}\HK_{\Lambda, \Sigma}(\mu_{t},\mu_{t+h})^2 \ \leq  \\
&\Big(\int_{\mathfrak{C}_\mathfrak{o}}{\Big(\Lambda|\nabla\phi|^2 + \Sigma\phi^2\Big)\xdif(\pi^1_{\#}\beta_{t,t+h})}\Big)^{1/2} \Big(\int_{\mathfrak{C}_\mathfrak{o}\times\mathfrak{C}}{\int_0^1{\Big(\frac{1}{\Lambda}\mathcal{R}^2(\theta')^2|x_2-x_1|^2 + \frac{4}{\Sigma}(\mathcal{R}')^2\Big)\xdif s}\xdif \beta_{t,t+h}}\Big)^{1/2} \\ & \quad \quad + 2C_{\phi,\Lambda, \Sigma}\HK_{\Lambda, \Sigma}(\mu_{t},\mu_{t+h})^2 \ \leq \\ 
 & \leq \ ||(\nabla\phi, \phi)||_{\xLtwo(\mu_t, \xR^d\times\xR)} \HK_{\Lambda, \Sigma}(\mu_t,\mu_{t+h}) \ + 2C_{\phi,\Lambda, \Sigma}\HK_{\Lambda, \Sigma}(\mu_{t},\mu_{t+h})^2
\end{align*}
and thus, 
\begin{equation}\label{eq: L}
\limsup_{h\to0}\frac{1}{|h|}\Big|\int_{\xR^d}{\phi\xdif \mu_{t+h}} - \int_{\xR^d}{\phi\xdif\mu_t}\Big| \ \leq \  ||(\nabla\phi, \phi)||_{\xLtwo(\mu_t, \xR^d\times\xR)}|\mu_t'|.
\end{equation}
At this point, we may follow the proof of Thm. 8.3.1 in \cite{AmbrosioGigliSavare05}. Therein, a similar characterization of absolutely continuous curves in the space of Borel probability measures with finite second order moments, endowed with the Kantorovich-Wasserstein distance, was given by solving a suitable minimum problem. We adapt that approach. Let $\mu\in\mathcal{M}((0,1)\times\xR^d)$ be defined by 
\begin{equation*}
\int_{(0,1)\times\xR^d}{\psi(t,x)\xdif\mu(t,x)} \ = \ \int_0^1{\int_{\xR^d}{\psi(t,x)\xdif\mu_t(x)}\xdif t}
\end{equation*}
for all $\psi\in\xCzero_b((0,1)\times\xR^d)$, and let $(\xLtwo(\mu,\xR^d\times\xR), ||\cdot||_{\xLtwo(\mu, \xR^d\times\xR)})$ denote the normed space of all $\mu$-measurable vector fields $(\hat{v},\hat{w})$ from $(0,1)\times\xR^d$ to $\xR^d\times\xR$ satisfying
\begin{equation}
||(\hat{v}_t,\hat{w}_t)||_{\xLtwo(\mu, \xR^d\times\xR)} := \Big(\int_0^1{\int_{\xR^d}{(\Lambda|\hat{v}_t|^2 + \Sigma|\hat{w}_t|^2)\xdif \mu_t}\xdif t}\Big)^{1/2} < +\infty. 
\end{equation}
An application of \eqref{eq: L}, Fatou's Lemma, H\"older's inequality and Hahn-Banach Theorem shows that there exists a unique bounded linear functional $L$ defined on the closure $\mathcal{V}$ in $\xLtwo(\mu, \xR^d\times\xR)$ of the subspace $\{(\nabla\zeta,\zeta): \ \zeta\in\xCinfty_c((0,1)\times\xR^d)\}$, satisfying
\begin{equation}
L((\nabla\zeta, \zeta)) := -\int_0^1{\int_{\xR^d}{\partial_t\zeta(t,x)\xdif \mu_t}\xdif t} \quad \text{for all $\zeta\in\xCinfty_c((0,1)\times\xR^d)$.}
\end{equation}
We consider the minimum problem
\begin{equation}\label{eq: min}
\min \Big\{\frac{1}{2}||(\hat{v},\hat{w})||^2_{\xLtwo(\mu, \xR^d\times\xR)} - L((\hat{v},\hat{w})): \ (\hat{v},\hat{w})\in\mathcal{V}\Big\}.
\end{equation}
The same argument as in the proof of Thm. 8.3.1 in \cite{AmbrosioGigliSavare05} proves that the unique solution $(\tilde{v},\tilde{w})$ to \eqref{eq: min} (which clearly exists) satisfies \eqref{eq: cewr} and, for $\mathscr{L}^1$-a.e. $t\in(0,1)$, the function $(\tilde{v}_t,\tilde{w}_t)$ belongs to the closure in $\xLtwo(\mu_t, \xR^d\times\mathbb{R})$ of the subspace $\{(\nabla\zeta, \zeta): \ \zeta\in\xCinfty_c(\xR^d)\}$  and \eqref{eq: tilde vw metric derivative} holds good. By Thm. 8.17 in \cite{liero2018optimal}, for every Borel vector field $(\hat{v}, \hat{w})\in\xLtwo(\mu,\xR^d\times\xR)$ satisfying the continuity equation with reaction \eqref{eq: cewr} the opposite inequality holds good, i.e.
\begin{equation*}
\int_{\xR^d}{(\Lambda|\hat{v}_t|^2 + \Sigma|\hat{w}_t|^2)\xdif \mu_t} \ \geq \ |\mu_t'|^2 \quad\text{ for } \mathscr{L}^1\text{-a.e.} t\in(0,1). 
\end{equation*} 
It follows from this and from the strict convexity of $||\cdot||^2_{\xLtwo(\mu_t, \xR^d\times\xR)}$ that the Borel vector field $(\tilde{v},\tilde{w})$ solves (\eqref{eq: cewr}, \eqref{eq: vw metric derivative}) and that it coincides $\mathscr{L}^1$-a.e. with any other vector field solving (\eqref{eq: cewr}, \eqref{eq: vw metric derivative}). This completes the proof of Prop. \ref{prop: refinement}.  
\end{proof}

\begin{definition}\label{def: N}
\textnormal{Let $\mathcal{D}(\xR^d)$ be a countable subset of $\xCinfty_c(\xR^d)$ so that every function in $\xCinfty_c(\xR^d)$ can be approximated in the $\xCone$-norm by a sequence of functions in $\mathcal{D}(\xR^d)$.}

\textnormal{We define $\mathcal{N}$ as the set of points $t\in(0,1)$ at which the following holds good: 
\begin{enumerate}[(i)]
\item the limit \eqref{eq: metric derivative} exists,
\item $(v_t,w_t)$ belongs to the closure in $\xLtwo(\mu_t, \xR^d\times\mathbb{R})$ of the subspace $\{(\nabla\zeta, \zeta): \ \zeta\in\xCinfty_c(\xR^d)\}$ and satisfies \eqref{eq: vw metric derivative},
\item the mapping
\begin{equation}\label{eq: 50}
t\mapsto \frac{1}{2}\HK_{\Lambda, \Sigma}(\mu_t, \nu)^2
\end{equation}
is differentiable at $t$, 
\item and, for all $\psi\in \mathcal{D}(\xR^d)$, 
\begin{equation}\label{eq: N}
\lim_{h\to 0}\frac{1}{h}\Big(\int_{\xR^d}{\psi\xdif \mu_{t+h}} - \int_{\xR^d}{\psi\xdif \mu_{t}}\Big) = \int_{\xR^d}{(\Lambda\langle\nabla\psi, v_t\rangle + \Sigma \psi w_t)\xdif \mu_t}.
\end{equation}
\end{enumerate}
}
\end{definition}
Please note that $(0,1)\setminus \mathcal{N}$ is an $\mathscr{L}^1$-negligible set;
it follows from \eqref{eq: continuity equation with reaction in duality} that, for fixed $\psi\in\xCinfty_c(\xR^d)$, the mapping $t\mapsto \int_{\xR^d}{\psi\xdif\mu_t}$ is absolutely continuous from $[0,1]$ to $\xR$ and \eqref{eq: N} holds good at $\mathscr{L}^1$-a.e. $t\in(0,1)$. 

We turn to the push-forward $\Delta_{t,h,s}\in\mathcal{M}((\xR^d\times\xR)\times(\xR^d\times\xR))$ of $\beta_{t,t+h}$ through \eqref{eq: mapping for push-forward}, defined by
\begin{eqnarray*}
\int_{(\xR^d\times\xR)\times(\xR^d\times\xR)}{\Phi(y)\xdif \Delta_{t,h,s}}\quad\quad\quad \\ =  \int_{\mathfrak{C}\times\mathfrak{C}}{\Phi\Big(({\sf x},{\sf r})([x_1,r_1]), \Big(\frac{1}{h\Lambda}\mathcal{R}_{[x_1,r_1],[x_2,r_2]}(s)\theta_{[x_1,r_1],[x_2,r_2]}'(s)(x_2-x_1), \frac{2}{h\Sigma}\mathcal{R}_{[x_1,r_1],[x_2,r_2]}'(s)\Big)\Big)\xdif\beta_{t,t+h}}
\end{eqnarray*} 
for all $\Phi\in\xCzero_b((\xR^d\times\xR)\times(\xR^d\times\xR))$. 
\begin{proposition}\label{prop: push-forward} The following holds good for all $t\in\mathcal{N}$. 
\begin{enumerate}[(i)]
\item Let $s\in(0,1)$. Then
\begin{equation}\label{eq: convergence of delta}
\lim_{h\to 0} \int_{(\xR^d\times\xR)\times(\xR^d\times\xR)}{\Phi(y)\xdif \Delta_{t,h,s}} \ = \ \int_{\xR^d}{\Phi((x,1),(v_t(x),w_t(x)))\xdif \mu_t}
\end{equation}
for all continuous functions $\Phi: (\xR^d\times\xR)\times(\xR^d\times\xR)\to\xR$ satisfying the growth condition
\begin{equation}\label{eq: growth condition}
|\Phi((x_1,r_1),(x_2,r_2))| \ \leq \ C\Big(1+|x_2|^2+|r_2|^2\Big) 
\end{equation}
for some $C>0$. 
\item Define $\mathfrak{C}_{t,h}:=\Big\{[x,r]\in\mathfrak{C}\setminus\{\mathfrak{o}\}: \ |v_t(x)| < \frac{1}{\sqrt{|h|}} \text{ and } |w_t(x)| < \frac{2}{\sqrt{|h|}\Sigma}\Big\}$ and $\Xi_{t,h}:\mathfrak{C}\to\mathfrak{C}$, 
\begin{equation}\label{eq: def of Xi}
\Xi_{t,h}([x,r]):= \begin{cases} [x+\Lambda hv_t(x), r(1+\frac{\Sigma}{2}hw_t(x))] &\text{ if } [x, r]\in\mathfrak{C}_{t,h}, \\ [x,r] &\text{ else}. 
\end{cases}
\end{equation}
Let $\chi_{t,h}:=(\Xi_{t,h})_{\#}(\pi^1_{\#}\beta_{t,t+h})$ be the push-forward of the first marginal of $\beta_{t,t+h}$ through $\Xi_{t,h}$, i.e. 
\begin{equation*}
\int_{\mathfrak{C}}{\phi([x,r])\xdif \chi_{t,h}}  =  \int_{\mathfrak{C}}{\phi(\Xi_{t,h}([x,r]))\xdif (\pi^1_{\#}\beta_{t,t+h})}
\end{equation*}
for all $\phi\in\xCzero_b(\mathfrak{C})$. Then
\begin{equation}\label{eq: Xi}
\lim_{h\to 0} \frac{\HK_{\Lambda, \Sigma}(\mu_{t+h}, \mathfrak{h}\chi_{t,h})^2}{h^2} \ = \ 0. 
\end{equation}
\end{enumerate}
\end{proposition}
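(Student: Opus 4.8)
The plan is to treat parts (i) and (ii) in turn, using the optimality of $\beta_{t,t+h}$ on the cone and the estimates \eqref{eq: 0}--\eqref{eq: 2} together with the metric-derivative information at $t\in\mathcal N$ as the two main inputs. For part (i), I would first reduce to test functions $\Phi$ of the special form $\Phi((x,r),(\xi,\rho)) = \phi(x)\,\Psi(\xi,\rho)$ with $\phi\in\xCzero_b(\xR^d)$ and $\Psi$ continuous with quadratic growth; by a density/Stone--Weierstrass argument and the growth condition \eqref{eq: growth condition}, convergence for such products suffices. The crucial uniform integrability needed to pass to the limit comes from \eqref{eq: first derivatives R and theta} and \eqref{eq: transport}: integrating the left-hand side of \eqref{eq: first derivatives R and theta} against $\beta_{t,t+h}$ (over the set where $r_1,r_2>0$ and $|x_1-x_2|\le\pi\sqrt{\Lambda/\Sigma}$, which carries full mass) yields exactly $\HK_{\Lambda,\Sigma}(\mu_t,\mu_{t+h})^2$, so the $\Delta_{t,h,s}$-integral of $|\xi|^2+|\rho|^2$ is bounded by $C\,\HK_{\Lambda,\Sigma}(\mu_t,\mu_{t+h})^2/h^2$, which stays bounded as $h\to0$ since $t\in\mathcal N$. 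To identify the limit one then uses that, by \eqref{eq: beta}, $\pi^1_\#\beta_{t,t+h}$ converges to $(x,1)_\#\mu_t$, that \eqref{eq: N} forces the first-order behaviour of the cone plan to be governed by $(v_t,w_t)$, and that the "blow-up" second components in \eqref{eq: mapping for push-forward} are, up to a controlled error of order $\HK_{\Lambda,\Sigma}(\mu_t,\mu_{t+h})^2$ (via \eqref{eq: 1} and the second-derivative estimates), precisely approximations of $(v_t(x),w_t(x))$; passing $h\to0$ along $t\in\mathcal N$ and using Definition \ref{def: N}(ii), (iv) pins down the limit as $\int_{\xR^d}\Phi((x,1),(v_t(x),w_t(x)))\xdif\mu_t$.

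For part (ii), the strategy is to exhibit an explicit competitor on the cone connecting a lift of $\mu_{t+h}$ to a lift of $\mathfrak h\chi_{t,h}$ whose transport cost is $o(h^2)$. Starting from the optimal $\beta_{t,t+h}$, note that $\chi_{t,h}=(\Xi_{t,h})_\#(\pi^1_\#\beta_{t,t+h})$ is, by construction of $\Xi_{t,h}$ and of the geodesics in Section \ref{sec: 1}, obtained by moving the first marginal of $\beta_{t,t+h}$ a "time-$h$" step along the cone geodesics toward the second marginal: indeed $[x+\Lambda h v_t(x),\,r(1+\tfrac\Sigma2 h w_t(x))]$ is the first-order Taylor expansion at $s=0$ of $\eta(s)=[x+\theta(s)(x_2-x_1),\mathcal R(s)]$, using the right-differentiability at $s=0$ recorded after \eqref{eq: first derivatives R and theta} and the identification of the blow-up velocities with $(v_t,w_t)$ from part (i). Therefore the plan $\tilde\beta_h := (\pi^2, \Xi_{t,h}\circ\pi^1)_\#\beta_{t,t+h}$ (glued appropriately, or rather the plan transporting $\alpha_2:=\pi^2_\#\beta_{t,t+h}$ to $\chi_{t,h}$ coming from $\beta_{t,t+h}$) has $\mathcal W_{\mathfrak C,\Lambda,\Sigma}$-cost bounded by the squared cone distance between $\Xi_{t,h}([x_1,r_1])$ and $[x_2,r_2]$ integrated against $\beta_{t,t+h}$. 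A Taylor estimate of ${\sf d}_{\mathfrak C,\Lambda,\Sigma}(\eta(h),\eta(1))^2 = (1-h)^2{\sf d}_{\mathfrak C,\Lambda,\Sigma}(\eta(0),\eta(1))^2$ against the "frozen-velocity" step $\Xi_{t,h}$ shows the difference is $o(h)\cdot{\sf d}_{\mathfrak C,\Lambda,\Sigma}([x_1,r_1],[x_2,r_2])^2 + O(h^2)$-type terms, and integrating and using $\int {\sf d}_{\mathfrak C,\Lambda,\Sigma}^2 \xdif\beta_{t,t+h}=\HK_{\Lambda,\Sigma}(\mu_t,\mu_{t+h})^2 = O(h^2)$ gives $\mathcal W_{\mathfrak C,\Lambda,\Sigma}(\alpha_2,\chi_{t,h})^2 = o(h^2)$. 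Since $\mathfrak h\alpha_2 = \mu_{t+h}$ (as $\alpha_2$ is the second marginal of an optimal cone plan for $\HK_{\Lambda,\Sigma}(\mu_t,\mu_{t+h})$) and $\HK_{\Lambda,\Sigma}(\mu_{t+h},\mathfrak h\chi_{t,h})^2 \le \mathcal W_{\mathfrak C,\Lambda,\Sigma}(\alpha_2,\chi_{t,h})^2$ by \eqref{eq: cone 1}, \eqref{eq: Xi} follows.

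I expect the main obstacle to be the bookkeeping in part (ii): making the comparison between the genuine cone geodesic step and the "frozen Eulerian velocity" step $\Xi_{t,h}$ rigorous and showing the error is genuinely $o(h^2)$ after integration requires carefully splitting the mass into the set $\mathfrak C_{t,h}$ (where $|v_t|,|w_t|$ are controlled by $1/\sqrt{|h|}$) and its complement. On $\mathfrak C_{t,h}$ one gets quantitative Taylor control with remainder governed by products like $h^2(|v_t|^2+|w_t|^2)$ which, because of the truncation, are $o(h)\cdot(|v_t|^2+|w_t|^2)$ and integrate against $\mu_t$ to $o(h)\cdot|\mu_t'|^2 = o(h)$ — not yet $o(h^2)$, so one must also use that the relevant transport cost already carries a factor ${\sf d}_{\mathfrak C,\Lambda,\Sigma}([x_1,r_1],[x_2,r_2])^2$ whose $\beta_{t,t+h}$-integral is $O(h^2)$, combining both smallnesses; on the complement of $\mathfrak C_{t,h}$, $\Xi_{t,h}$ is the identity and the cost is just $\int_{\{[x,r]\notin\mathfrak C_{t,h}\}}{\sf d}_{\mathfrak C,\Lambda,\Sigma}([x_1,r_1],[x_2,r_2])^2\xdif\beta_{t,t+h}$, which is $o(h^2)$ because the bad set has $\mu_t$-measure $o(1)$ (Chebyshev, using $(v_t,w_t)\in\xLtwo(\mu_t)$) and the integrand has total integral $O(h^2)$, so a uniform-integrability argument closes it. Part (i)'s only delicate point, by comparison, is the quadratic-growth truncation, which is handled routinely by the uniform $\xLtwo$-bound on the $\Delta_{t,h,s}$ established above.
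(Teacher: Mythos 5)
Your overall architecture matches the paper's --- tightness of $(\Delta_{t,h,s})_h$ via the second-moment identity $\int(\Lambda|x_2|^2+\Sigma|r_2|^2)\xdif\Delta_{t,h,s}=\HK_{\Lambda,\Sigma}(\mu_t,\mu_{t+h})^2/h^2$ for (i), and the competitor plan $(\Xi_{t,h}\circ\pi^1,\pi^2)_{\#}\beta_{t,t+h}$ with the splitting along $\mathfrak C_{t,h}$ for (ii) --- but there are two genuine gaps. In (i), the identification of the weak limit $\Delta$ as the graph measure $((x,1),(v_t(x),w_t(x)))_{\#}\mu_t$ is asserted rather than argued, and the reason you give for it is not the right one: the estimates \eqref{eq: 0}--\eqref{eq: 2} together with \eqref{eq: N} only show that the \emph{barycentric projection} $(v_\Delta,w_\Delta)$ of $\Delta$ tests the same way as $(v_t,w_t)$ against pairs $(\nabla\psi,\psi)$, which determines only its orthogonal projection onto the closure of $\{(\nabla\zeta,\zeta)\}$ and says nothing about concentration of $\Delta$ on a graph. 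The missing mechanism is an energy argument: Def.~\ref{def: N}(ii) places $(v_t,w_t)$ \emph{inside} that closed subspace with norm $|\mu_t'|$, the second moments of $\Delta$ are bounded by the limit $|\mu_t'|^2$ of those of $\Delta_{t,h_l,s}$, and the resulting chain of inequalities (H\"older, then Jensen for the disintegration $\Delta_{x_1}$) must be a chain of equalities; the equality cases force both $(v_\Delta,w_\Delta)=(v_t,w_t)$ $\mu_t$-a.e.\ and $\Delta_{x_1}=\delta_{(v_t(x_1),w_t(x_1))}$. Without this step the limit is not pinned down, and "\eqref{eq: 1} plus second-derivative estimates" do not give a pointwise approximation of the blow-up velocities by $(v_t,w_t)$ --- that approximation is the conclusion, not an input.

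The second gap is in the central comparison of part (ii). The map $\Xi_{t,h}$ is not a ``time-$h$ step along the cone geodesics'': since the second components of \eqref{eq: mapping for push-forward} carry the factor $1/h$, the identification of the blow-up velocities with $(v_t,w_t)$ means the \emph{entire} geodesic displacement from $y_1$ to $y_2$ is $\approx(\Lambda h v_t(x_1),\tfrac{\Sigma}{2}h w_t(x_1))$, so $\Xi_{t,h}(y_1)$ approximates $y_2=\eta_{y_1,y_2}(1)$ itself, not $\eta_{y_1,y_2}(h)$. Your bound through ${\sf d}_{\mathfrak C,\Lambda,\Sigma}(\eta(h),\eta(1))^2=(1-h)^2{\sf d}_{\mathfrak C,\Lambda,\Sigma}(y_1,y_2)^2$ therefore yields, after integration against $\beta_{t,t+h}$, a term $(1-h)^2\HK_{\Lambda,\Sigma}(\mu_t,\mu_{t+h})^2\sim h^2|\mu_t'|^2$, which is exactly of order $h^2$ and defeats \eqref{eq: Xi} unless $|\mu_t'|=0$. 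What is needed instead is a direct estimate of ${\sf d}_{\mathfrak C,\Lambda,\Sigma}(\Xi_{t,h}(y_1),y_2)^2$ --- the paper joins the two points by an explicit $\xCone$ curve and uses the length-space upper bound of Lem.~8.1 in \cite{liero2018optimal} --- whose dominant contribution after division by $h^2$ is $\int(\,|x_2-r_1v_t(x_1)|^2+|r_2-r_1w_t(x_1)|^2)\xdif\Delta_{t,h,s}$, the squared $\xLtwo$ distance between the blow-up velocities and the Eulerian field. Proving that this tends to $0$ is the heart of (ii) and requires an ingredient you omit: $(v_t,w_t)$ is merely Borel, so the quadratic test function above is not continuous and part (i) cannot be applied to it directly; one must interpolate through the smooth approximants $(\nabla\zeta_n,\zeta_n)$ of Def.~\ref{def: N}(ii), apply part (i) to the continuous quadratic test functions built from $\zeta_n$, and send $n\to\infty$ uniformly in $h$. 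Your remaining bookkeeping (Chebyshev and uniform integrability on the complement of $\mathfrak C_{t,h}$) is in the right spirit, but it does not substitute for these two steps.
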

\begin{proof} We set $Y:=\xR^d\times\xR$. 

(i) Let $t\in\mathcal{N}$ and $s\in(0,1)$. We note that, by \eqref{eq: first derivatives R and theta} and Def. \ref{def: N}(i),  
\begin{equation}\label{eq: prokhorov}
\int_{Y\times Y}{(\Lambda|x_2|^2 + \Sigma|r_2|^2)\xdif \Delta_{t,h,s}((x_1,r_1),(x_2,r_2))} = \frac{\HK_{\Lambda, \Sigma}(\mu_t,\mu_{t+h})^2}{h^2} \to |\mu_t'|^2 \quad\text{ as } h\to0. 
\end{equation}
We may apply Prokhorov's Theorem to any sequence $(\Delta_{t,h_k,s})_{k\in\mathbb{N}}, \ h_k\to 0,$ of measures from the family $(\Delta_{t,h,s})_{h\in(-t,1-t)}\subset \mathcal{M}(Y\times Y)$, since such sequence is bounded and equally tight by \eqref{eq: beta} and \eqref{eq: prokhorov}, and we obtain a subsequence $h_{k_l}\to 0$ and a measure $\Delta\in\mathcal{M}(Y\times Y)$ so that $(\Delta_{t,h_{k_l},s})_{l\in\mathbb{N}}$ converges to $\Delta$ in the weak topology on $\mathcal{M}(Y\times Y)$, in duality with continuous and bounded functions. So let $(\Delta_{t,h_l,s})_{l\in\mathbb{N}} \ (h_l\to0)$ be a convergent sequence with limit measure $\Delta\in\mathcal{M}(Y\times Y)$, i.e.
\begin{equation}\label{eq: convergent}
\lim_{l\to\infty}\int_{Y\times Y}{\Phi(y)\xdif \Delta_{t,h_l,s}} \ = \ \int_{Y\times Y}{\Phi(y)\xdif \Delta}
\end{equation}
for all $\Phi\in\xCzero_b(Y\times Y)$. We want to identify $\Delta$ as $((x,1),(v_t(x),w_t(x)))_{\#}\mu_t$. It is not difficult to infer from \eqref{eq: beta} that the first marginal $\pi^1_{\#}\Delta$ of $\Delta$ coincides with $(x,1)_{\#}\mu_t$, i.e. 
\begin{equation}\label{eq: first marginal Delta}
\int_{Y}{\phi((x,r))\xdif (\pi^1_{\#}\Delta)} = \int_{\xR^d}{\phi((x,1))\xdif \mu_t}
\end{equation} 
for all $\phi\in\xCzero_b(Y)$. Let $\psi\in\mathcal{D}(\xR^d)$. Then \eqref{eq: convergent} also holds good for $\Phi((x_1,r_1),(x_2,r_2)):=\Big[\Lambda\langle\nabla\psi(x_1),x_2\rangle + \Sigma\psi(x_1)r_2\Big]r_1$: Indeed, we have
\begin{equation*}
\lim_{l\to\infty}\int_{Y\times Y}{(\Phi_N) \xdif \Delta_{t,h_l,s}} = \int_{Y\times Y}{(\Phi_N) \xdif \Delta}
\end{equation*} 
for all $N > 0$, with $\Phi_N:= (\Phi\wedge N)\vee(-N)$. Setting $Y_N:=\{(x,r)\in Y: \ |x|+|r| > N\}, \ C_\psi:=\sup_{x\in\xR^d}\{|\nabla\psi(x)|+|\psi(x)|\},$ and applying \eqref{eq: prokhorov},\eqref{eq: beta} and \eqref{eq: first marginal Delta}, we conclude that for every $\epsilon > 0$ there exists $N_\epsilon > 0$ so that 
\begin{equation*}
\int_{Y\times Y_N}{(|x_2|+|r_2|)\xdif \Delta_{t,h_l,s}} + \int_{Y\times Y_N}{(|x_2|+|r_2|)\xdif \Delta} \ \leq \epsilon \quad \text{ for all } N\geq N_\epsilon, \ l\in\mathbb{N},
\end{equation*}
and
\begin{eqnarray*}
&& \limsup_{l\to\infty} \Big|\int_{Y\times Y}{\Phi\xdif \Delta_{t,h_l,s}} - \int_{Y\times Y}{\Phi\xdif \Delta}\Big| \\ &\leq& \limsup_{l\to\infty} \Big|\int_{Y\times Y}{(\Phi_{C_\psi(\Lambda + \Sigma)N_\epsilon})\xdif \Delta_{t,h_l,s}} - \int_{Y\times Y}{\Phi_{C_\psi(\Lambda + \Sigma)N_\epsilon} \xdif \Delta}\Big| \\ && \quad + \ C_\psi(\Lambda+\Sigma)\limsup_{l\to\infty}\int_{Y\times Y_{N_\epsilon}}{(|x_2|+|r_2|)\xdif (\Delta_{t,h_l,s}+\Delta)} \\
&\leq& C_\psi(\Lambda + \Sigma)\epsilon. 
\end{eqnarray*}
Hence, taking \eqref{eq: first marginal Delta} into account, we obtain
\begin{equation}\label{eq: 3}
\lim_{l\to\infty}\int_{Y\times Y}{\Big[\Lambda\langle\nabla\psi(x_1),x_2\rangle + \Sigma\psi(x_1)r_2\Big]r_1\xdif \Delta_{t,h_l,s}} = \int_{Y\times Y}{\Big[\Lambda\langle\nabla\psi(x_1),x_2\rangle + \Sigma\psi(x_1)r_2\Big] \xdif \Delta}.
\end{equation}

It holds that
\begin{eqnarray*}
\int_{\xR^d}{\psi\xdif\mu_{t+h_l}} - \int_{\xR^d}{\psi\xdif\mu_{t}} = \int_{\mathfrak{C}\times\mathfrak{C}}{(\psi(x_2)r_2^2 - \psi(x_1)r_1^2)\xdif\beta_{t,t+h}} \\ = \int_{\mathfrak{C}\times\mathfrak{C}}{\int_0^1{\frac{\xdif}{\xdif s}\Big[\psi(x_1+\theta_{[x_1,r_1],[x_2,r_2]}(s)(x_2-x_1))\mathcal{R}_{[x_1,r_1],[x_2,r_2]}(s)^2\Big] \xdif s}\xdif \beta_{t,t+h_l}}
\end{eqnarray*}
so that \eqref{eq: N}, \eqref{eq: 0}, \eqref{eq: 1}, \eqref{eq: 2}, Def. \ref{def: N}(i) and \eqref{eq: 3} yield
\begin{eqnarray*}
\int_{\xR^d}{(\Lambda\langle\nabla\psi, v_t\rangle + \Sigma\psi w_t)\xdif \mu_t} = \lim_{l\to \infty}\frac{1}{h_l}\Big(\int_{\xR^d}{\psi\xdif \mu_{t+h_l}} - \int_{\xR^d}{\psi\xdif\mu_t}\Big) \\ = \lim_{l\to\infty}\int_{Y\times Y}{\Big[\Lambda\langle\nabla\psi(x_1),x_2\rangle + \Sigma\psi(x_1)r_2\Big]r_1\xdif \Delta_{t,h_l,s}} = \int_{Y\times Y}{\Big[\Lambda\langle\nabla\psi(x_1),x_2\rangle + \Sigma\psi(x_1)r_2\Big] \xdif \Delta}. 
\end{eqnarray*}
According to the Disintegration Theorem (see e.g. Thm. 5.3.1 in \cite{AmbrosioGigliSavare05}) and \eqref{eq: first marginal Delta}, there exists a Borel family of probability measures $(\Delta_{x_1})_{x_1\in\xR^d}\subset \mathcal{M}(Y), \ \Delta_{x_1}(Y) = 1,$ so that
\begin{equation*}
\int_{Y\times Y}{\Phi\xdif \Delta} = \int_{\xR^d}{\Big(\int_Y{\Phi((x_1,1), (x_2,r_2))\xdif \Delta_{x_1}((x_2,r_2))}\Big)\xdif \mu_t(x_1)}
\end{equation*}
for all $\Delta$-integrable maps $\Phi: Y\times Y \to \xR$. We infer from \eqref{eq: prokhorov} that, for $\mu_t$-a.e. $x_1\in\xR^d$, the measure $\Delta_{x_1}$ has finite second order moment and we define the function $(v_\Delta, w_\Delta): \xR^d\to \xR^d\times \xR$ by
\begin{equation}\label{eq: barycentric projection}
v_\Delta(x_1) := \int_Y{x_2\xdif \Delta_{x_1}((x_2,r_2))}, \ w_{\Delta}(x_1):=\int_Y{r_2\xdif\Delta_{x_1}((x_2,r_2))} \quad\text{ for } \mu_t \text{-a.e. } x_1\in\xR^d.
\end{equation}
The function $(v_\Delta, w_\Delta)$ is Borel measurable (cf. (5.3.1) and Def. 5.4.2 in \cite{AmbrosioGigliSavare05}), and 
\begin{eqnarray*}
&& \int_{Y\times Y}{\Big[\Lambda\langle\nabla\psi(x_1),x_2\rangle + \Sigma\psi(x_1)r_2\Big] \xdif \Delta} \\ &=& \int_{\xR^d}{\Big(\int_Y{\Big[\Lambda\langle\nabla\psi(x_1),x_2\rangle + \Sigma\psi(x_1)r_2\Big] \xdif \Delta_{x_1}((x_2,r_2))}\Big)\xdif \mu_t(x_1)} \\ &=& \int_{\xR^d}{(\Lambda\langle\nabla\psi, v_\Delta\rangle + \Sigma \psi w_\Delta)\xdif \mu_t}.
\end{eqnarray*} 
All in all, we have found that
\begin{equation}\label{eq: cewr Delta}
\int_{\xR^d}{(\Lambda\langle\nabla\psi, v_t\rangle + \Sigma \psi w_t)\xdif \mu_t} = \int_{\xR^d}{(\Lambda\langle\nabla\psi, v_\Delta\rangle + \Sigma \psi w_\Delta)\xdif \mu_t}
\end{equation}
for all $\psi\in\mathcal{D}(\xR^d)$. Since every function in $\xCinfty_c(\xR^d)$ can be approximated in the $\xCone$-norm by a sequence of functions in $\mathcal{D}(\xR^d)$ (cf. Def. \ref{def: N}) and, by \eqref{eq: prokhorov} and Def. \ref{def: N}(ii), the functions $v_\Delta, w_\Delta, v_t, w_t$ are square-integrable w.r.t. $\mu_t$, 
\eqref{eq: cewr Delta} holds good for all $\psi\in\xCinfty_c(\xR^d)$ and for all pairs in the $\xLtwo(\mu_t, \xR^d\times\xR)$-closure of $\{(\nabla\zeta,\zeta): \ \zeta\in\xCinfty_c(\xR^d)\}$. It follows from this and from Def. \ref{def: N}(ii) that 
\begin{equation}\label{eq: step 1}
||(v_t,w_t)||_{\xLtwo(\mu_t, \xR^d\times\xR)}^2 \ = \int_{\xR^d}{(\Lambda\langle v_t, v_\Delta\rangle + \Sigma w_t w_\Delta)\xdif \mu_t}. 
\end{equation}
Applying H\"older's inequality to \eqref{eq: step 1}, taking the definition \eqref{eq: barycentric projection} of $v_\Delta, w_\Delta$, Jensen's inequality, \eqref{eq: convergent}, \eqref{eq: prokhorov} and Def. \ref{def: N}(ii) into account, we obtain
\begin{eqnarray}
||(v_t,w_t)||_{\xLtwo(\mu_t, \xR^d\times\xR)} \leq ||(v_\Delta,w_\Delta)||_{\xLtwo(\mu_t, \xR^d\times\xR)} \leq \Big(\int_{Y\times Y}{(\Lambda|x_2|^2 + \Sigma|r_2|^2)\xdif \Delta}\Big)^{1/2} \label{eq: step 2} \leq \\ \leq \lim_{l\to\infty}\Big(\int_{Y\times Y}{(\Lambda|x_2|^2 + \Sigma|r_2|^2)\xdif \Delta_{t,h_l,s}}\Big)^{1/2} =  ||(v_t,w_t)||_{\xLtwo(\mu_t, \xR^d\times\xR)} \label{eq: step 3}
\end{eqnarray}
so that, in fact, equality holds good everywhere in \eqref{eq: step 2} and \eqref{eq: step 3}. We infer from this and from \eqref{eq: step 1} that
\begin{equation*}
||(v_t,w_t) - (v_\Delta, w_\Delta)||_{\xLtwo(\mu_t, \xR^d\times\xR)} = 0
\end{equation*}
which means 
\begin{equation}\label{eq: step 4}
v_t(x) = v_\Delta(x) \text{ and } w_t(x) = w_\Delta(x) \quad\quad \text{ for } \mu_t\text{-a.e. } x\in\xR^d. 
\end{equation} 
Moreover, the fact that the second inequality in \eqref{eq: step 2}, resulting from Jensen's inequality, is in fact an equality and \eqref{eq: step 4} yield $\Delta_{x_1} = \delta_{v_t(x_1)}\otimes\delta_{w_t(x_1)}$ for $\mu_t$-a.e. $x_1\in\xR^d$ (cf. a canonical proof of Jensen's inequality), i.e.
\begin{equation}\label{eq: Jensen}
\int_Y{\phi((x,r))\xdif \Delta_{x_1}} = \phi(v_t(x_1), w_t(x_1)) 
\end{equation}
for all $\phi\in\xCzero_b(Y)$, for $\mu_t$-a.e. $x_1\in\xR^d$.

Altogether, we may conclude that $\Delta=((x,1),(v_t(x),w_t(x)))_{\#}\mu_t$,
\begin{equation}\label{eq: convergence of moments}
\int_{Y\times Y}{(\Lambda|x_2|^2+\Sigma|r_2|^2)\xdif\Delta} = |\mu_t'|^2 =  \lim_{l\to\infty}\int_{Y\times Y}{(\Lambda|x_2|^2+\Sigma|r_2|^2)\xdif\Delta_{t,h_l,s}}
\end{equation}
and that \eqref{eq: convergence of delta} holds good for all $\Phi\in\xCzero_b(Y\times Y)$. A similar argument as in the proof of \eqref{eq: 3}, making use of \eqref{eq: convergence of moments}, will show \eqref{eq: convergence of delta} for all continuous functions $\Phi: Y\times Y \to \xR$ satisfying the growth condition \eqref{eq: growth condition} (cf. Thm. 7.12 in \cite{Villani03} where the space of Borel probability measures with finite second order moments is considered and the equivalence between convergence in the Kantorovich-Wasserstein distance and convergence in duality with continuous functions satisfying a suitable growth condition is proved). This completes the proof of Prop. \ref{prop: push-forward}(i). \\

(ii) Let $t\in\mathcal{N}$. According to \eqref{eq: cone 1}, \eqref{eq: Wasserstein on the cone}, we have
\begin{equation}\label{eq: ii 1}
\frac{\HK_{\Lambda, \Sigma}(\mu_{t+h},\mathfrak{h}\chi_{t,h})^2}{h^2} \ \leq \ \frac{1}{h^2}\int_{\mathfrak{C}\times\mathfrak{C}}{{\sf d}_{\mathfrak{C}, \Lambda, \Sigma}(\Xi_{t,h}([x_1,r_1]), [x_2,r_2])^2 \xdif \beta_{t,t+h}}.
\end{equation}
We will prove that the right-hand side of \eqref{eq: ii 1} converges to $0$ as $h\to 0$. 

First we note that, by Prokhorov's Theorem, Def. \ref{def: N}(ii) and the proof of Prop. \ref{prop: push-forward}(i), every sequence $\Big(((v_t(x_1),w_t(x_1)), (x_2,r_2))_{\#}\Delta_{t,h_l,s}\Big)_{l\in\mathbb{N}}, \ h_l\to0,$ is relatively compact w.r.t. the weak topology in $\mathcal{M}(Y\times Y)$ and in duality with continuous functions $\Phi: Y\times Y\to\xR$ satisfying \eqref{eq: growth condition}, and the second marginals of the corresponding limit measures coincide with $(v_t(x), w_t(x))_{\#}\mu_t$. It follows from this and from an application of the Dominated Convergence Theorem that
\begin{eqnarray*}
\lim_{N\to\infty}\limsup_{h\to 0}\frac{1}{h^2}\int_{(\mathfrak{C}\setminus\mathfrak{C}_{t,1/N})\times\mathfrak{C}}{{\sf d}_{\mathfrak{C},\Lambda, \Sigma}([x_1,r_1],[x_2,r_2])^2\xdif\beta_{t,t+h}} \ = \ 0, 
\end{eqnarray*}
which implies 
\begin{equation}\label{eq: ii 2}
\lim_{h\to 0} \ \frac{1}{h^2}\int_{(\mathfrak{C}\setminus\mathfrak{C}_{t,h})\times \mathfrak{C}}{{\sf d}_{\mathfrak{C},\Lambda, \Sigma}([x_1,r_1],[x_2,r_2])^2\xdif \beta_{t,t+h}} \ = \ 0. 
\end{equation}

Next we consider 
$\frac{1}{h^2}\int_{\mathfrak{C}_{t,h}\times\mathfrak{C}}{{\sf d}_{\mathfrak{C}, \Lambda, \Sigma}(\Xi_{t,h}([x_1,r_1]), [x_2,r_2])^2 \xdif \beta_{t,t+h}}$.
According to (\cite{Burago-Burago-Ivanov01}, Sect. 3.6) and (\cite{liero2018optimal}, Sect. 8.1), the geometric cone $(\mathfrak{C},{\sf d}_{\mathfrak{C},\Lambda, \Sigma})$ is a length space and it holds that any curve $\eta:=[x,r]: [0,1]\to\mathfrak{C}$ for $\xCone$-functions $x: [0,1]\to\xR^d$ and $r:[0,1]\to[0,+\infty)$ is absolutely continuous in $(\mathfrak{C},{\sf d}_{\mathfrak{C},\Lambda,\Sigma})$ and 
\begin{equation*}
{\sf d}_{\mathfrak{C},\Lambda, \Sigma}(\eta(1), \eta(0))^2\leq \int_0^1{\Big(\frac{4}{\Sigma}(r'(s))^2+\frac{1}{\Lambda} r(s)^2|x'(s)|^2\Big)\xdif s}
\end{equation*}
(cf. (\cite{liero2018optimal}, Lem. 8.1)). We define, for $y_1:=[x_1,r_1]\in\mathfrak{C}_{t,h}$, $y_2:=[x_2,r_2]\in\mathfrak{C}$, with $|x_1-x_2|\leq \pi\sqrt{\Lambda/\Sigma}$ if $r_2 >0$, an absolutely continuous curve $\mathcal{C}_{h,\Xi(y_1),y_2}: [0,1]\to\mathfrak{C}$ connecting $\Xi(y_1)=[x_1+\Lambda hv_t(x_1), r_1(1+ \Sigma h w_t(x_1)/2)]$ and $y_2$ by setting $\mathcal{C}_{h,\Xi(y_1),y_2}:= [\mathcal{X}_{h,\Xi(y_1),y_2},\mathcal{R}_{h,\Xi(y_1),y_2}]$, 
\begin{eqnarray}
\mathcal{X}_{h,\Xi(y_1),y_2}(s) &:=& x_1 + \theta_{y_1,y_2}(s)(x_2-x_1) + \Lambda(1-s)h v_t(x_1),\\
\mathcal{R}_{h, \Xi(y_1),y_2}(s) &:=& \mathcal{R}_{y_1,y_2}(s)\Big(1+\Sigma(1-s)h w_t(x_1)/2\Big)
\end{eqnarray}
(cf. \eqref{eq: geodesics for push-forward}, \eqref{eq: transport}). The functions $\mathcal{X}_{h,\Xi(y_1),y_2}: [0,1]\to\xR^d$ and $\mathcal{R}_{h,\Xi(y_1),y_2}: [0,1]\to [0,+\infty)$ are continuously differentiable with
\begin{eqnarray*}
(\mathcal{R}_{h,\Xi(y_1),y_2}'(s))^2 &=& \Big(\Sigma\mathcal{R}'_{y_1,y_2}(s)(1-s)h w_t(x_1)/2 + \mathcal{R}_{y_1,y_2}'(s) - \Sigma\mathcal{R}_{y_1,y_2}(s) hw_t(x_1)/2\Big)^2 \\ &\leq& 2|h| \Sigma \ {\sf d}_{\mathfrak{C},\Lambda, \Sigma}(y_1,y_2)^2 + 2\Big(\mathcal{R}'_{y_1,y_2}(s)-\Sigma r_1 hw_t(x_1)/2\Big)^2
\end{eqnarray*}
and 
\begin{eqnarray*}
\mathcal{R}_{h,\Xi(y_1),y_2}(s)^2|\mathcal{X}_{h,\Xi(y_1),y_2}'(s)|^2 \leq 4\mathcal{R}_{y_1,y_2}(s)^2|\theta'_{y_1,y_2}(s)(x_2-x_1) - \Lambda hv_t(x_1)|^2 \\
\leq 8\Big(|\mathcal{R}_{y_1,y_2}(s)\theta'_{y_1,y_2}(s)(x_2 - x_1)- \Lambda r_1 h v_t(x_1)|^2 + \Lambda^2 |h| |\mathcal{R}_{y_1,y_2}(s)-r_1|^2\Big) \\
\leq 8\Big(|\mathcal{R}_{y_1,y_2}(s)\theta'_{y_1,y_2}(s)(x_2 - x_1)- \Lambda r_1 h v_t(x_1)|^2 + \Lambda^2 \Sigma |h| / 4 \ {\sf d}_{\mathfrak{C},\Lambda, \Sigma}(y_1,y_2)^2\Big),
\end{eqnarray*}
where we have made use of \eqref{eq: geodesics for push-forward}, \eqref{eq: first derivatives R and theta} and the fact that $y_1=[x_1,r_1]\in\mathfrak{C}_{t,h}$. It follows from the above estimations and an application of Fubini's Theorem that
\begin{eqnarray*}
\frac{1}{h^2}\int_{\mathfrak{C}_{t,h}\times\mathfrak{C}}{{\sf d}_{\mathfrak{C}, \Lambda, \Sigma}(\Xi_{t,h}([x_1,r_1]), [x_2,r_2])^2 \xdif \beta_{t,t+h}} \\ \leq \frac{1}{h^2}\int_{\mathfrak{C}_{t,h}\times\mathfrak{C}}{\int_0^1{\Big(\frac{4}{\Sigma}(\mathcal{R}'_{h,\Xi(y_1),y_2}(s))^2+\frac{1}{\Lambda}\mathcal{R}_{h,\Xi(y_1),y_2}(s)^2|\mathcal{X}_{h,\Xi(y_1),y_2}'(s)|^2\Big)\xdif s}\xdif \beta_{t,t+h}} \\
\leq \int_0^1{\int_{Y\times Y}{\Big(2\Sigma(r_2-r_1 w_t(x_1))^2 + 8\Lambda|x_2-r_1 v_t(x_1)|^2\Big)\xdif \Delta_{t,h,s}((x_1,r_1),(x_2,r_2))}\xdif s} \\  + \ C_{\Lambda, \Sigma} \frac{\HK_{\Lambda, \Sigma}(\mu_t,\mu_{t+h})^2}{|h|}
\end{eqnarray*}
with $C_{\Lambda, \Sigma}$ only depending on $\Lambda$ and $\Sigma$. According to Def. \ref{def: N}(ii), there exists a sequence of functions $\zeta_n\in\xCinfty_c(\xR^d) \ (n\in\mathbb{N})$ so that $((\nabla\zeta_n, \zeta_n))_{n\in\mathbb{N}}$ converges to $(v_t,w_t)$ in $\xLtwo(\mu_t, \xR^d\times\xR)$, which means
\begin{equation}\label{eq: vorletzte}
\lim_{n\to\infty} \ \int_{Y\times Y}{\Big(r_1^2 (\zeta_n(x_1)-w_t(x_1))^2 + r_1^2|\nabla\zeta_n(x_1)-v_t(x_1)|^2\Big)\xdif \Delta_{t,h,s}((x_1,r_1),(x_2,r_2))}  = 0
\end{equation}
uniformly in $h\in(-t,1-t)$ and $s\in(0,1)$. Moreover, Prop. \ref{prop: push-forward}(i) and \eqref{eq: beta} yield 
\begin{equation}\label{eq: letzte}
\lim_{h\to0} \ \int_{Y\times Y}{\Big(\Sigma(r_2-r_1 \zeta_n(x_1))^2 + \Lambda|x_2-r_1 \nabla\zeta_n(x_1)|^2\Big)\xdif \Delta_{t,h,s}} = ||(v_t,w_t)-(\nabla\zeta_n,\zeta_n)||_{\xLtwo(\mu_t,\xR^d\times\xR)}^2 
\end{equation} 
for all $n\in\mathbb{N}$ and $s\in(0,1)$. Altogether, by applying Def. \ref{def: N}(i), \eqref{eq: vorletzte}, \eqref{eq: letzte} and Fatou's Lemma to the above estimation of $\frac{1}{h^2}\int_{\mathfrak{C}_{t,h}\times\mathfrak{C}}{{\sf d}_{\mathfrak{C}, \Lambda, \Sigma}(\Xi_{t,h}([x_1,r_1]), [x_2,r_2])^2 \xdif \beta_{t,t+h}}$, 
we obtain
\begin{equation}
\lim_{h\to0} \ \frac{1}{h^2}\int_{\mathfrak{C}_{t,h}\times\mathfrak{C}}{{\sf d}_{\mathfrak{C}, \Lambda, \Sigma}(\Xi_{t,h}([x_1,r_1]), [x_2,r_2])^2 \xdif \beta_{t,t+h}} \ = \ 0, 
\end{equation}
which completes the proof of Prop. \ref{prop: push-forward}(ii).  
\end{proof}

We are now in a position to compute the derivative \eqref{eq: derivatives haupt} at every $t\in\mathcal{N}$. 
\begin{theorem}\label{thm: main theorem} 
If $t\in\mathcal{N}$ and $\beta_{t, \star}\in\mathcal{M}(\mathfrak{C}\times\mathfrak{C})$ is optimal in the definition of $\HK_{\Lambda, \Sigma}(\mu_t, \nu)^2$ according to \eqref{eq: cone}, \eqref{eq: Wasserstein on the cone}, with first marginal $\alpha_t\in\mathcal{M}_2(\mathfrak{C}), \ \mathfrak{h}\alpha_t\leq \mu_t$, and second marginal $\alpha_\star\in\mathcal{M}_2(\mathfrak{C}), \ \mathfrak{h}\alpha_\star\leq \nu$, then the derivative $\frac{\xdif}{\xdif t} [\frac{1}{2}\HK_{\Lambda, \Sigma}(\mu_t, \nu)^2]$ of \eqref{eq: 50} at $t$ coincides with
\begin{equation}\label{eq: 51}
\mathcal{F}_{t, \star} \ + \ 2\int_{\xR^d}{w_t(x)\xdif(\mu_t-\mathfrak{h}\alpha_t)}
\end{equation}
where $\mathcal{F}_{t, \star}$ is defined as
\begin{equation}\label{eq: 52} 2\int_{\mathfrak{C}\times\mathfrak{C}}{\Big[r_1^2 w_t(x_1)-r_1r_2 w_t(x_1)\cos(\sqrt{\Sigma/4\Lambda}|x_1-x_2|)-r_1r_2\sqrt{\Lambda/\Sigma} \ \langle S_{\Lambda,\Sigma}(x_1,x_2), v_t(x_1)\rangle\Big]\xdif\beta_{t,\star}},  
\end{equation}
with \begin{equation}\label{eq: SLambdaSigma}
S_{\Lambda, \Sigma}(x_1, x_2):= \begin{cases}
\frac{\sin(\sqrt{\Sigma/4\Lambda}|x_1-x_2|)}{|x_1-x_2|}(x_2-x_1) &\text{ if } x_1\neq x_2, \\
0 &\text{ if } x_1=x_2.
\end{cases}  
\end{equation}  
\end{theorem}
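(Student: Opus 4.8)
The strategy is to compute the derivative of $t \mapsto \frac12 \HK_{\Lambda,\Sigma}(\mu_t,\nu)^2$ at a fixed $t \in \mathcal{N}$ from both sides, using the two characterizations \eqref{eq: cone 1} and \eqref{eq: cone} of $\HK$ together with the perturbed-plan constructions of Prop.~\ref{prop: push-forward}. First I would fix an optimal plan $\beta_{t,\star}$ for $\HK_{\Lambda,\Sigma}(\mu_t,\nu)^2$ in the formulation \eqref{eq: cone}, with first marginal $\alpha_t$ ($\mathfrak{h}\alpha_t \le \mu_t$) and second marginal $\alpha_\star$ ($\mathfrak{h}\alpha_\star \le \nu$). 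The term $\frac{4}{\Sigma}(\mu_t - \mathfrak{h}\alpha_t)(\xR^d)$ records the mass of $\mu_t$ not lifted to the cone; the Benamou–Brenier/reaction structure forces this leftover mass to evolve purely by the reaction field $w_t$, which is the origin of the correction term $2\int_{\xR^d} w_t \,\xdif(\mu_t - \mathfrak{h}\alpha_t)$ in \eqref{eq: 51}.

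For the \emph{upper bound} on $\frac{\xdif}{\xdif t}[\frac12\HK(\mu_t,\nu)^2]$, I would push the lifted part $\alpha_t$ forward through the cone map $\Xi_{t,h}$ of \eqref{eq: def of Xi} — i.e. infinitesimally transport along $\Lambda h v_t$ and rescale the radial coordinate by $(1 + \tfrac{\Sigma}{2} h w_t)$ — and simultaneously rescale the unlifted mass $\mu_t - \mathfrak{h}\alpha_t$ radially by the same factor on the cone (adding its creation cost). This produces a competitor for $\HK_{\Lambda,\Sigma}(\mu_{t+h},\nu)^2$ in formulation \eqref{eq: cone}, using the push-forward $(\Xi_{t,h} \times \mathrm{id})_\#\beta_{t,\star}$ together with $\Xi_{t,h}$ applied to a lift of $\mu_t - \mathfrak{h}\alpha_t$. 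By Prop.~\ref{prop: push-forward}(ii) the $\HK$-error between $\mu_{t+h}$ and $\mathfrak{h}\chi_{t,h}$ is $o(h^2)$, so one may legitimately replace $\mu_{t+h}$ by $\mathfrak{h}\chi_{t,h}$ up to negligible order. Expanding ${\sf d}_{\mathfrak{C},\Lambda,\Sigma}(\Xi_{t,h}(y_1), y_2)^2$ to first order in $h$ — differentiating the explicit formula $\frac{4}{\Sigma}(r_1^2 + r_2^2 - 2r_1 r_2 \cos(\cdots))$ in the perturbation $(x_1,r_1) \mapsto (x_1 + \Lambda h v_t, r_1(1+\tfrac{\Sigma}{2}h w_t))$ — yields exactly the integrand of \eqref{eq: 52}: the $r_1^2 w_t$ and $r_1 r_2 w_t \cos$ pieces come from the radial rescaling, and the $\langle S_{\Lambda,\Sigma}, v_t\rangle$ piece with $S_{\Lambda,\Sigma}$ from \eqref{eq: SLambdaSigma} comes from differentiating $\cos(\sqrt{\Sigma/4\Lambda}|x_1 + \Lambda h v_t - x_2|)$ in $h$. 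Combined with the derivative of the leftover-mass term this gives $\limsup_{h\to0^+}\frac1h[\tfrac12\HK(\mu_{t+h},\nu)^2 - \tfrac12\HK(\mu_t,\nu)^2] \le \eqref{eq: 51}$, and running the same competitor with $h < 0$ gives the reverse inequality for the left derivative, hence (since $t \in \mathcal{N}$ the derivative exists) $\frac{\xdif}{\xdif t}[\tfrac12\HK(\mu_t,\nu)^2] = \eqref{eq: 51}$ once we also have the opposite bound.

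For the \emph{lower bound} I would fix, for each small $h$, an optimal plan $\beta_{t+h,\star}$ for $\HK(\mu_{t+h},\nu)^2$ and run the construction backwards: transport its $\mu_{t+h}$-side marginal by $-\Lambda h v_t$ and rescale radially by $(1+\tfrac{\Sigma}{2}h w_t)^{-1}$ to obtain a competitor against $\HK(\mu_t,\nu)^2$; here one uses a stability/relative-compactness argument (Prokhorov, as in Prop.~\ref{prop: push-forward}) to pass to a limit of these optimal plans as $h \to 0$ and to control error terms by the metric derivative $|\mu_t'|^2$, which is finite by $t \in \mathcal{N}$. The two bounds then pinch the derivative to the value \eqref{eq: 51}.

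The main obstacle, and the step deserving the most care, is the uniformity of the first-order Taylor expansion of ${\sf d}_{\mathfrak{C},\Lambda,\Sigma}^2$ under the $\Xi_{t,h}$-perturbation: the cosine cost is only finite on $\{|x_1 - x_2| \le \pi\sqrt{\Lambda/\Sigma}\}$, it is singular near the boundary of that set, $v_t$ and $w_t$ are merely $\xLtwo(\mu_t)$-functions (not bounded), and $r_1, r_2$ are unbounded on the cone. Controlling the remainder requires the truncation device $\mathfrak{C}_{t,h} = \{|v_t| < |h|^{-1/2}, |w_t| < 2|h|^{-1/2}\Sigma^{-1}\}$ exactly as in Prop.~\ref{prop: push-forward}(ii) — on its complement one invokes \eqref{eq: ii 2}, on $\mathfrak{C}_{t,h}$ the perturbation is $o(1)$ so the second-order remainder is $O(h \cdot {\sf d}_{\mathfrak{C}}^2)$ and hence $o(h)$ after integration against $\beta_{t,\star}$ by \eqref{eq: prokhorov}-type bounds — together with \eqref{eq: transport} to stay inside the region where the cost is finite and $S_{\Lambda,\Sigma}$ is well-behaved, and the barycentric-projection/disintegration bookkeeping of Prop.~\ref{prop: push-forward}(i) to identify the limiting integrand in terms of $(v_t, w_t)$ rather than the cone data. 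Finally I would double-check that the split of $\mu_t$ into $\mathfrak{h}\alpha_t$ and $\mu_t - \mathfrak{h}\alpha_t$ is compatible with the continuity-equation-with-reaction \eqref{eq: cewr}: since $(v_t, w_t)$ is the unique minimal field (Prop.~\ref{prop: refinement}), perturbing the unlifted mass by the \emph{same} $w_t$ is the correct — and essentially forced — choice, which is what makes the correction term take the stated form.
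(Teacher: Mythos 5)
Your proposal is correct and follows essentially the same route as the paper's proof: replace $\mu_{t+h}$ by $\mathfrak{h}\chi_{t,h}$ via Prop.~\ref{prop: push-forward}(ii), use the push-forward of $\alpha_t$ (hence of $\beta_{t,\star}$) through $\Xi_{t,h}$ as a competitor in formulation \eqref{eq: cone}, expand the cone cost ${\sf d}_{\mathfrak{C},\Lambda,\Sigma}^2$ to first order in $h$ to produce $\mathcal{F}_{t,\star}$ plus the leftover-mass term $2\int w_t\,\xdif(\mu_t-\mathfrak{h}\alpha_t)$, and pinch the two one-sided difference quotients using that the derivative exists for $t\in\mathcal{N}$. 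The separate ``lower bound'' you sketch via optimal plans $\beta_{t+h,\star}$ and a Prokhorov stability argument is unnecessary (and more delicate than needed): since the derivative exists, the single competitor run for $h>0$ and for $h<0$ already yields both inequalities, which is exactly how the paper closes the argument.
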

\begin{proof}
Let $t\in\mathcal{N}$. Then \eqref{eq: 50} is differentiable at $t$ and, by \eqref{eq: Xi},
\begin{equation}\label{eq: limit exists}
\frac{\xdif}{\xdif s} \Big[\frac{1}{2}\HK_{\Lambda, \Sigma}(\mu_s, \nu)^2\Big]\Bigg\vert_{s=t} \ = \ \lim_{h\to 0}\frac{\frac{1}{2}\HK_{\Lambda, \Sigma}(\mathfrak{h}\chi_{t,h}, \nu)^2 - \frac{1}{2}\HK_{\Lambda, \Sigma}(\mu_t, \nu)^2}{h},
\end{equation}
with $\chi_{t,h}$ defined as in Prop. \ref{prop: push-forward}(ii). Let $\bar{\chi}_{t,h}:=(\Xi_{t,h})_{\#}\alpha_t$ be the push-forward of $\alpha_t$ through the mapping $\Xi_{t,h}$ defined as in \eqref{eq: def of Xi}. We have 
\begin{eqnarray*}
\int_{\xR^d}{\phi\xdif(\mathfrak{h}\bar{\chi}_{t,h})} = \int_{\mathfrak{C}}{{\sf r}^2\phi({\sf x})\xdif\bar{\chi}_{t,h}}  = \int_{\mathfrak{C_{t,h}}}{{\sf r}^2(1+\Sigma hw_t({\sf x})/2)^2\phi({\sf x}+\Lambda hv_t({\sf x}))\xdif\alpha_t} + \int_{\mathfrak{C}\setminus\mathfrak{C}_{t,h}}{{\sf r}^2\phi({\sf x})\xdif \alpha_t}  
\\
= \int_{{\sf x}(\mathfrak{C}_{t,h})}{(1+\Sigma hw_t(x)/2)^2\phi(x+\Lambda hv_t(x))\xdif\mathfrak{h}\alpha_t} + \int_{{\sf x}(\mathfrak{C}\setminus\mathfrak{C}_{t,h})}{\phi(x)\xdif \mathfrak{h}\alpha_t} \\
\leq \int_{{\sf x}(\mathfrak{C}_{t,h})}{(1+\Sigma hw_t(x)/2)^2\phi(x+\Lambda hv_t(x))\xdif\mu_t} + \int_{{\sf x}(\mathfrak{C}\setminus\mathfrak{C}_{t,h})}{\phi(x)\xdif \mu_t} \ =  \int_{\xR^d}{\phi\xdif (\mathfrak{h}\chi_{t,h})} 
\end{eqnarray*} 
for all nonnegative bounded Borel functions $\phi: \xR^d\to \xR$ (cf. \eqref{eq: homogenous marginal}, \eqref{eq: homogenous marginal 2}), from which we infer that
\begin{eqnarray*}
\mathfrak{h}\bar{\chi}_{t,h} &\leq& \mathfrak{h}\chi_{t,h}, \\
(\mathfrak{h}\chi_{t,h} - \mathfrak{h}\bar{\chi}_{t,h})(\xR^d) &=& (\mu_t-\mathfrak{h}\alpha_t)(\xR^d) \ + \int_{{\sf x}(\mathfrak{C}_{t,h})}{\Big(\Sigma hw_t(x)+\frac{\Sigma^2}{4}h^2 w_t(x)^2\Big)\xdif (\mu_t-\mathfrak{h}\alpha_t)} . 
\end{eqnarray*} 
We obtain 
\begin{eqnarray*}
\frac{1}{2}\Big(\HK_{\Lambda, \Sigma}(\mathfrak{h}\chi_{t,h}, \nu)^2 - \HK_{\Lambda, \Sigma}(\mu_t, \nu)^2\Big)   &\leq&  \frac{1}{2}\Big(\mathcal{W}_{\mathfrak{C}, \Lambda, \Sigma}(\bar{\chi}_{t,h}, \alpha_\star)^2 - \mathcal{W}_{\mathfrak{C}, \Lambda, \Sigma}(\alpha_t, \alpha_\star)^2\Big) \\ &+& 2 \int_{{\sf x}(\mathfrak{C}_{t,h})}{\Big(hw_t(x)+\frac{\Sigma}{4}h^2 w_t(x)^2\Big)\xdif (\mu_t-\mathfrak{h}\alpha_t)}.
\end{eqnarray*}
The same argument as in the proof of Lem. 2.2 in \cite{fleissnerMMHK} yields 
\begin{eqnarray*}
\limsup_{h\downarrow 0}\frac{\frac{1}{2}\mathcal{W}_{\mathfrak{C}, \Lambda, \Sigma}(\bar{\chi}_{t,h}, \alpha_\star)^2 - \frac{1}{2}\mathcal{W}_{\mathfrak{C}, \Lambda, \Sigma}(\alpha_t, \alpha_\star)^2}{h} \leq \\ 2 \int_{\mathfrak{C}\times\mathfrak{C}}{\Big[r_1^2 w_t(x_1)-r_1r_2 w_t(x_1)\cos(\sqrt{\Sigma/4\Lambda}|x_1-x_2|)-r_1r_2\sqrt{\Lambda/\Sigma} \ \langle S_{\Lambda,\Sigma}(x_1,x_2), v_t(x_1)\rangle\Big]\xdif\beta_{t,\star}} \\ \leq \liminf_{h\uparrow 0}\frac{\frac{1}{2}\mathcal{W}_{\mathfrak{C}, \Lambda, \Sigma}(\bar{\chi}_{t,h}, \alpha_\star)^2 - \frac{1}{2}\mathcal{W}_{\mathfrak{C}, \Lambda, \Sigma}(\alpha_t, \alpha_\star)^2}{h}, 
\end{eqnarray*}
with $S_{\Lambda, \Sigma}$ defined as in \eqref{eq: SLambdaSigma}. Since the limit \eqref{eq: limit exists} exists and
\begin{equation*}
\lim_{h\to0} \int_{{\sf x}(\mathfrak{C}_{t,h})}{\Big(w_t(x)+\frac{\Sigma}{4}h w_t(x)\Big)\xdif (\mu_t-\mathfrak{h}\alpha_t)} \ = \  \int_{\xR^d}{w_t(x)\xdif (\mu_t-\mathfrak{h}\alpha_t)},
\end{equation*}
it follows from the above computations that
\begin{equation*}
\lim_{h\to 0}\frac{\frac{1}{2}\HK_{\Lambda, \Sigma}(\mathfrak{h}\chi_{t,h}, \nu)^2 - \frac{1}{2}\HK_{\Lambda, \Sigma}(\mu_t, \nu)^2}{h} \quad = \quad \mathcal{F}_{t,\star} \ + \ 2 \int_{\xR^d}{w_t(x)\xdif (\mu_t-\mathfrak{h}\alpha_t)}. 
\end{equation*}
The proof of Thm. \ref{thm: main theorem} is complete. 
\end{proof}
We would like to remark that the derivatives of \eqref{eq: 50} at $t\in\mathcal{N}$ can be expressed equally in terms of the Logarithmic Entropy-Transport characterization \eqref{eq: LET} of the Hellinger-Kantorovich distance $\HK_{\Lambda, \Sigma}$, by applying \eqref{eq: induced solution on the cone} to the above representation \eqref{eq: 51}, \eqref{eq: 52} of the derivatives. 

\paragraph{Acknowledgement} I gratefully acknowledge support from the Erwin Schr\"odinger International Institute for Mathematics and Physics (Vienna) during my participation in the programme ``Optimal Transport''.
\bibliographystyle{siam}
\bibliography{bibliografia}
\end{document}